\newtheorem{theorem}{Theorem}[section]
\newtheorem{assumption}[theorem]{Assumption}
\newtheorem*{theorem*}{Theorem}
\newtheorem{lemma}[theorem]{Lemma}
\newtheorem{corollary}[theorem]{Corollary}
\theoremstyle{definition}
\newtheorem{definition}[theorem]{Definition}
\theoremstyle{remark}
\newtheorem{remark}[theorem]{Remark}
\newtheorem{example}[theorem]{Example}
\numberwithin{equation}{section}
\crefname{enumi}{}{}
\crefname{equation}{}{}
\crefname{assumption}{assumption}{assumptions}
\crefname{assumption}{Assumption}{Assumptions}
\def\@tocline#1#2#3#4#5#6#7{\relax
\ifnum #1>\c@tocdepth % then omit
\else
\par \addpenalty\@secpenalty\addvspace{#2}%
\begingroup \hyphenpenalty\@M
\@ifempty{#4}{%
\@tempdima\csname r@tocindent\number#1\endcsname\relax
}{%
\@tempdima#4\relax
}%
\parindent\z@ \leftskip#3\relax \advance\leftskip\@tempdima\relax
\rightskip\@pnumwidth plus4em \parfillskip-\@pnumwidth
#5\leavevmode\hskip-\@tempdima
\ifcase #1
\or\or \hskip 1em \or \hskip 2em \else \hskip 3em \fi%
#6\nobreak\relax
\dotfill\hbox to\@pnumwidth{\@tocpagenum{#7}}\par
\nobreak
\endgroup
\fi}
\def \R {\mathbb {R}}
\def \E {\mathbb{E}}
\def \P {\mathbb{P}}
\def \V {\mathbb{V}}
\def \Ent {\mathrm{Ent}}
\newcommand{\IID}{{\rm i.i.d.}}
\newcommand{\iid}{\overset{\IID}{\sim}}
\renewcommand{\tilde}{\widetilde}
\newcommand{\eps}{\varepsilon}
\begin{document}

\title[LOO concentration]{Concentration inequalities for leave-one-out cross validation}
\author[Avelin]{Benny Avelin}
\address{Benny Avelin,
Department of Mathematics,
Uppsala University,
S-751 06 Uppsala,
Sweden}
\author[Viitasaari]{Lauri Viitasaari}
\address{Lauri Viitasaari,
Department of Mathematics,
Uppsala University,
S-751 06 Uppsala,
Sweden}

\keywords{}
\subjclass[2010]{}
\date{\today}

\begin{abstract}
	In this article we prove that estimator stability is enough to show that leave-one-out cross validation is a sound procedure, by providing concentration bounds in a general framework. In particular, we provide concentration bounds beyond Lipschitz continuity assumptions on the loss or on the estimator. We obtain our results by relying on random variables with distribution satisfying the logarithmic Sobolev inequality, providing us a relatively rich class of distributions. We illustrate our method by considering several interesting examples, including linear regression, kernel density estimation, and stabilized/truncated estimators such as stabilized kernel regression.
\end{abstract}
 
\maketitle

\medskip\noindent
{\bf Mathematics Subject Classifications (2020)}:
62R07,
62G05,
60F15

\medskip\noindent
{\bf Keywords:}
Leave-one-out cross validation,
concentration inequalities,
logarithmic Sobolev inequality,
sub-Gaussian random variables
\allowdisplaybreaks

\section{Introduction}
It is customary in many statistical and machine learning methods to use train-validation, where the data is split into a training set and a validation set. Then the training set is used for estimating (training) the model, while the validation set is used to measure the performance of the fitted model.
The downside in splitting the data into the training set and the validation set is that one is wasting
a lot of data to be used for either the training, leading to inaccuracy, or to the validation, leading to insufficient knowledge whether the model is performing well. One common technique to overcome this problem is to perform cross-validation, originally introduced by Stone in \cite{Stone}. In the present paper, we consider the classical \emph{leave-one-out} ({LOO}) cross validation. In {LOO} cross validation one leaves one fixed observation out, and uses the remaining ones as the training set. By using the remaining one observation as the validation set, one can then measure the performance of the fitted model on this single data point. However, by repeating this procedure through the whole data set of size $n$, one can then measure the performance by averaging over $n$ data points. In this way one can, on each step, use most of the data for the training, while the performance is measured on $n$ observation points as well.

{
Cross validation has many uses, but the most common ones are as a model selection procedure (see \cite{Stone}) or as a method to estimate generalization error.
In this paper we are interested in estimating the generalization error. For a discussion about model selection, see \cref{sec:conclusion}.
}

Concentration and algorithm stability measures how well one estimates the true error (with respect to some given loss function) arising from fitting the model in the first place, and hence are crucially important topics for practical applications. Indeed, under- or over-fitting can lead to severe inaccuracies in making predictions by using the underlying model, thus reducing the prediction performance significantly. As such, the topic has been extensively studied in the literature from different perspectives. Several theoretical studies on the goodness of cross-validation include, among others,
\cite{Bauer-Reiz,Becker,cornec,Golub-Heath-Wahba,Gu,Gyorfi-Kohler-Krzyzak-Walk,Kindermann-Neubauer,Kindermann-Pereverzyev-Pilipenko,Li,Lukas}
in various settings. The $L^2$-error is studied, under various settings, in
\cite{Bartel-Hielscher,Bates-et-al,Gyorfi-Kohler-Krzyzak-Walk,Kale-Kumar-Vassilvitskii,Markatou-et-al}. For the results related to algorithmic stability, we refer to
{
\cite{BE, CG, DGL, DW79, DW79b, KN, RMP, SSSSS, AMS,Kale-Kumar-Vassilvitskii,Kearns-Ron,Kumar-Lokshtanov-Vassilvitskii} and references therein. For a very good history of the subject of algorithmic stability in the context of cross validation, see \cite{AMS}.} Finally, while we do not discuss the issue of cross-validation having relatively heavy computational burden in the present paper, the reader may consult
\cite{Bartel-et-al,Deshpande-Girard,Lukas-Hoog-Anderssen,Sidje-Williams-Burrage,Weinert} for efficient computations.

Despite the problem being well-studied in the literature, it seems that concentration bounds are not that well studied in a general framework, and most of the result presented in the literature assume various simplifying properties such as boundedness in the data or loss, e.g. \cite{cornec}, or Lipschitz in the Hamming distance, e.g. \cite{Bartel-Hielscher}. In this article we study concentration inequalities for {LOO} cross validation in a general unbounded framework using the gradient of the loss as information. For the data, we assume it to come independently from a distribution that satisfies the logarithmic Sobolev inequality, prototypically the Gaussian. This provides us a relatively simple proof that gives rather general concentration bounds under many interesting situations. Although in the theory of logarithmic Sobolev inequalities, the concentration of Lipschitz functions is a well known fact, it seems to the authors that in the context of algorithmic stability, the gradient is a new piece of information.
We finally mention an interesting and related paper, \cite{AMS}, where the authors assume a Gaussian type moment bound on the {LOO} estimate, allowing for unbounded data. In comparison, for Gaussian data this allows one to use Lipschitz loss functions, while our framework allows us to go beyond and cover quadratic losses as well, with a small price in the exponential rate.

The logarithmic Sobolev inequality has its roots in the theory of hypercontractivity of the Ornstein-Uhlenbeck semigroup, and was essentially first discovered by L. Gross in \cite{Gross}. The concentration of measure on the other hand was a term coined by Milman (for references see \cite{Milman}) in his study of asymptotic theory of Banach spaces, essentially saying that most of the surface area of the sphere is concentrated on the equator. However, it was Herbst who made the connection between the log-Sobolev inequality and concentration of measure in an unpublished letter to Gross, for more information see \cite{Ledoux}. The logarithmic Sobolev inequality is useful, because it tensorizes and is thus very suitable for the i.i.d.~framework. The assumption is rather mild in the sense that many distributions satisfy it. On the other hand however, it says that the measure is essentially Gaussian in the sense that it needs to be strictly log-concave outside a large ball.

We obtain our results by posing certain assumptions on the gradient of our estimator and the loss function. In particular, our results can be applied beyond the Lipschitz case that is a common assumption present in the literature. We illustrate the applicability of our approach with several examples, including linear regression, kernel density estimation, and stabilized/truncated estimators such as stabilized kernel regression.

The rest of the article is organized as follows. In \cref{sec:main:ass} we introduce our notation. In \cref{sec:main} we state our main theorems. We illustrate the applicability of our results by covering various interesting examples in Section \ref{sec:examples}. Several numerical experiments are conducted in Section \ref{sec:numerical}, while all the proofs are postponed to Section \ref{sec:proofs}. We end the paper with concluding remarks in \cref{sec:conclusion}.

% \section{Concentration for leave-one-out cross validation}
\section{Setup and preliminaries for leave-one-out cross validation}
\label{sec:main:ass}
In order to define the leave-one-out ({LOO}) cross validation we need first a definition:
\begin{definition}
	Let $Z_1,\ldots,Z_n$ be an i.i.d.~sequence of random variables, $Z_i = (X_i,Y_i) \in \R^k \times \R^m$. We will call $D = \{Z_1,\ldots,Z_n\}$ a \emph{dataset} of length $n$. Furthermore, for any $i = 1,\ldots,n$ we define the \emph{deleted dataset} $D_{(-i)} = \{Z_1,\ldots,Z_{i-1},Z_{i+1},\ldots,Z_n\}$.
\end{definition}

Leave-one-out cross validation is often used to evaluate the prediction performance of a statistic.
\begin{definition}
	For a statistic $T_n: \R^{(k+m) \otimes n} \to C(\R^k; \R^m)$ and a dataset $D$ of length $n$, we denote $T_{D} = T_n[D] \in C(\R^k; \R^m)$. We also denote $T_{D_{(-i)}} = T_{n-1}[D_{(-i)}]$.	Furthermore, we call any function {$L: \R^k \times \R^m \to \R$} a \emph{loss function}.
	The \emph{risk} is the expected loss w.r.t.~a random variable $Z = (X,Y)$ which is i.i.d.~to $(X_i,Y_i) \in D$.
\end{definition}

\begin{definition} \label{definition:LOO}
	Given a statistic $T_n: \R^{(k+m) \otimes n} \to C(\R^k; \R^m)$, a dataset $D$ of length $n$, and a loss function { $L: \R^k \times \R^m \to \R$} we define the \emph{leave-one-out} ({LOO}) cross validation of risk as
	\begin{align*}
		\hat L_{{LOO}} := \frac{1}{n} \sum_{i=1}^n L(T_{D_{(-i)}}(X_i),Y_i) \sim L_{{LOO}} := \E[L(T_D(X),Y) \mid D],
	\end{align*}
	where $Z = (X,Y)$ is i.i.d.~to $D$, and $\sim$ means 'estimate of'.
\end{definition}

Although the introduction of $L$ and $T_D$ is necessary, the notation is cumbersome. We will often without specific mention use the following shorthand instead: $f_i(z) = L(T_{D_{(-i)}}(x),y)$, $z = (x,y)$, and $f_D(z) = L(T_{D}(x),y)$.

This is an \emph{a posteriori} measurement of error (conditioned on $D$) and we consider the model ($T$) as fitted and fixed, we are just testing its prediction performance. Our main theorems below provide concentration inequalities for the error $L_{{LOO}}-\hat L_{{LOO}}$.

In what follows, we use $|\cdot|$ to denote the absolute value and $\|\cdot\|$ to denote the Euclidean norm  given by $\|x\|^2 = \sum_{k=1}^n x_k^2$, where $n$ is the size of the vector.

\subsection{Logarithmic Sobolev inequalities}

Throughout, we consider our data to arise from a distribution that has a finite logarithmic Sobolev constant, given below:
\begin{definition}
	We define the logarithmic Sobolev constant of a probability measure $\mu$ as
	\begin{align*}
		\sigma^2(\mu) := \sup_{f \in W^{1,2}(\mu)} \frac{1}{2}\frac{\Ent_\mu(f^2)}{\E_\mu[\|\nabla f\|^2]},
	\end{align*}
	where the entropy of $g$ is given by
	\begin{align*}
		\Ent_\mu(g) = \E_\mu[g \log(g)] - \E_\mu[g]\log(\E_\mu[g]),
	\end{align*}
	and $W^{1,2}(\mu)$ is the Sobolev Hilbert space associated with $\mu$.
\end{definition}

{The log-Sobolev constant essentially measures the tail behavior of the measure, but it does so in a rather strong way.}
The log-Sobolev {constant} is related to the {interplay between the geometry of the manifold of support and the log-concavity of the density at the tails.} Examples include measures uniformly bounded from below and above on the unit sphere, the Gaussian measure, and densities which are solutions to Kolmogorov Fokker Planck equations
\begin{align*}
	u_t = \nabla \cdot (\nabla u + \nabla F u),
\end{align*}
where $F$ has quadratic growth at infinity.
However, it is still fairly rich class of densities, and covers densities that are log-concave outside a large ball. See also \cite{Ledoux} for Bakry-Emery and Holley-Stroock perturbations. Worth mentioning is also the Aida perturbation argument, see \cite{Aida}. 

The fact that $\sigma^2(\mu) < \infty$ implies concentration is well known, see for instance \cite{Ledoux}.

\begin{lemma} \label{lemma:sgls}
	Let a measure $\mu$ satisfy $\sigma^2(\mu) < \infty$. Then for \emph{the sub-Gaussian constant} of a measure $\mu$, defined as
	\begin{align*}
		\sigma_{SG}^2(\mu) = \inf\left \{\sigma^2 : \int e^{tf} d\mu \leq e^{\sigma^2 t^2 /2}, \quad \forall t \in \R, \quad \forall f:\|\nabla f\|\leq 1 \right \},
	\end{align*}
	we have $\sigma_{SG}^2(\mu) \leq \sigma^2(\mu)$.
\end{lemma}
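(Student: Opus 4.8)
The plan is to run the classical Herbst argument, which extracts a sub-Gaussian Laplace transform bound from the logarithmic Sobolev inequality. First I would record the inequality in the equivalent form $\Ent_\mu(g^2) \le 2\sigma^2(\mu)\,\E_\mu[\|\grad g\|^2]$, valid for every $g \in W^{1,2}(\mu)$; this is just the defining inequality of $\sigma^2(\mu)$ rewritten. Fix a function $f$ with $\|\grad f\| \le 1$. Since replacing $f$ by $f - \E_\mu[f]$ changes neither $\grad f$ nor membership in the relevant class, I may assume the natural normalization $\E_\mu[f] = 0$ under which the stated bound can hold. Set $H(t) = \E_\mu[e^{tf}]$, the moment generating function, which I aim to bound by $e^{\sigma^2(\mu)t^2/2}$.

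The key step is to apply the log-Sobolev inequality to $g = e^{tf/2}$, so that $g^2 = e^{tf}$ and $\grad g = \tfrac{t}{2}e^{tf/2}\grad f$. Using $\|\grad f\| \le 1$ this yields $\E_\mu[\|\grad g\|^2] \le \tfrac{t^2}{4}H(t)$, while a direct computation from the definition of entropy gives $\Ent_\mu(g^2) = t H'(t) - H(t)\log H(t)$, where $H'(t) = \E_\mu[f e^{tf}]$. Substituting into the log-Sobolev inequality produces the differential inequality
\begin{align*}
	t H'(t) - H(t)\log H(t) \le \frac{\sigma^2(\mu)\,t^2}{2}\,H(t).
\end{align*}

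To integrate this, introduce $\psi(t) = \tfrac{1}{t}\log H(t)$ for $t > 0$. A short computation shows $\psi'(t) = \tfrac{1}{t^2 H(t)}\big(t H'(t) - H(t)\log H(t)\big)$, so the differential inequality becomes exactly $\psi'(t) \le \sigma^2(\mu)/2$. Since $H(0) = 1$ and $H'(0) = \E_\mu[f] = 0$, a Taylor expansion gives $\psi(t) \to \E_\mu[f] = 0$ as $t \to 0^+$. Integrating $\psi'(t) \le \sigma^2(\mu)/2$ from $0$ to $t$ then yields $\log H(t) \le \sigma^2(\mu)t^2/2$, i.e.\ the desired bound for $t > 0$; the case $t < 0$ follows by applying the same argument to $-f$, and $t = 0$ is trivial. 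Taking the infimum over admissible $\sigma^2$ in the definition of $\sigma_{SG}^2(\mu)$ then gives $\sigma_{SG}^2(\mu) \le \sigma^2(\mu)$.

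I expect the main obstacle to be the integrability and regularity bookkeeping rather than the algebra. The function $g = e^{tf/2}$ need not a priori belong to $W^{1,2}(\mu)$, and $H(t)$ must be shown finite and differentiable with differentiation under the integral sign justified. The standard remedy is to first prove the estimate for bounded Lipschitz $f$ (for which all quantities are manifestly finite and smooth in $t$), obtaining $H(t) \le e^{\sigma^2(\mu)t^2/2}$ uniformly, and then to pass to a general $f$ with $\|\grad f\| \le 1$ by truncation together with Fatou's lemma or monotone convergence. Handling this approximation carefully, and checking that the $t \to 0^+$ limit of $\psi$ is legitimate, is where the real care is needed.
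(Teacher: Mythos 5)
Your proposal is correct: it is the classical Herbst argument (apply the log-Sobolev inequality to $g = e^{tf/2}$, derive the differential inequality for $H(t)=\E_\mu[e^{tf}]$, integrate $\psi(t)=t^{-1}\log H(t)$), including the appropriate centering of $f$ and the standard truncation/approximation caveats. This is essentially the same proof the paper relies on, since the paper does not prove the lemma itself but cites it as well known from Ledoux, where precisely this Herbst argument is given.
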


It is interesting to note that the log-Sobolev property tensorizes with the same constant for product measures, i.e.~$\sigma^2\left(\mu^{\otimes n}\right) = \sigma^2(\mu)$. Thus, it suits the i.i.d.~framework well.

\subsection{Algorithmic stability}

The main objects of study will be the following concept of stability.
\begin{definition}
	\label{definition:standing}
	If there exists functions $\delta_1(n,z),\delta_2(n,z),\delta_3(n)$ all non-negative and tending to zero as $n \to \infty$, such that for any point $D \in \mathbb{R}^{(k+m)n}$ and $z = (x,y) \in \R^{k+m}$, {it holds}
	\begin{align}
		\label{eq:gradient-condition}
		\|\nabla_D L(T_{D}(x),y)\| \leq \delta_1(n,z) + \delta_2(n,z)\|D\|,
	\end{align}
	and for a dataset $D$ of length $n$ whose distribution $\mu$ satisfies $\sigma^2(\mu) < \infty$, it holds
	{
	\begin{align}\label{eq:delta3}
		|\E[L(T_{D_{(-i)}}(X_i),Y_i)]-\E[L(T_{D}(X),Y)]| < \delta_3(n), \quad i = 1,\ldots,n.
	\end{align}}
	Then we say that the pair $(T,L)$ is $(\delta_1,\delta_2,\delta_3)$-stable.
\end{definition}

{
	\begin{remark}\label{rem:variance}
		The interpretation of $\delta_1$ and $\delta_2$ is the ``variability'' or ``instability'' of the loss when we alter the data (i.e.~which fold). 
		To see how this allows us to control the variability, note first that the log-Sobolev inequality implies a Poincaré inequality (see \cite{Ledoux}), specifically, for any smooth $f$
		\begin{align*}
			\V[f] \leq C \E[\|\nabla f\|^2].
		\end{align*}
		Thus, if $\|\nabla_D f(D)\| \leq \delta_1(n) + \delta_2(n)\|D\|$, and $\E[\|D\|^2] \approx n$, then for the variance of $f$ to tend to zero we need $\delta_2(n) = o(\sqrt{n}^{-1})$.
		Furthermore, concentration is equivalent to concentration of $1$-Lipschitz functions (see \cite{Ledoux}), and as such the introduction of $\delta_1$ is very natural indeed.
		Finally, $\delta_3$ measures the bias induced by the fact that one observation is removed.
	\end{remark}

	Let us frame \cref{definition:standing} in a particular example. We first consider the simplest possible:
	\begin{example} \label{example:mean}
		Let $D$ be a dataset of length $n$. Let $T_D$ be the empirical mean. Then, $T_D$ is Lipschitz with respect to $D$. Let $L(x,y)=(x-y)^2$ be the quadratic loss. Then, the loss function evaluated on the estimator ($|T_D(x)-y|^2$) now satisfies \cref{definition:standing} with $\delta_1 \sim 1/\sqrt{n}$, $\delta_2 \sim 1/n$, and $\delta_3 \sim 1/n$. For a complete derivation, see \cref{sec:examples}.
	\end{example}
	On the other hand, a classical but a bit more complicated example is the following:
	\begin{example} \label{example:kernel}
		Consider kernel density estimation with a given kernel $K$ in $\mathbb{R}$. Assume that the kernel $K$ is differentiable and has a globally bounded derivative. Let our i.i.d.~dataset of length $n$ be denoted as $D$. Then, denote the kernel density estimator $\hat g_D$ with bandwidth $h$ of a density $g$. To estimate the integrated $L^2$ loss, one needs to estimate $L_{LOO} := \mathbb{E}[\hat g_D(Z) \mid D]$, where $Z$ is a new data point. This is often done using leave-one-out cross-validation, which involves computing the corresponding $\hat L_{LOO}$ as defined in \cref{definition:LOO}, where the estimator $T_D(z) = \hat g_D(Z)$ and the loss is $L(T_D(z)) = |T_D(z)|$. In \cref{sec:examples}, we will see that $\delta_1(n) = C/(h \sqrt{n})$, $\delta_2 \equiv 0$, and $\delta_3 \equiv 0$.
	\end{example}

	In \cref{sec:examples}, we will see that $\delta_1(n) \sim 1/\sqrt{n}$ in many examples (which corresponds to the best case, as seen in \cref{example:mean,example:kernel}). Referring back to \cref{rem:variance}, we need at least $\delta_2 = o(\sqrt{n}^{-1})$ for the variance of the estimator to tend to zero as $n \to \infty$.

	To the authors, it seems that the introduction of $\delta_1$ and $\delta_2$ is new, while the error controlled by $\delta_3$ appears in the literature. Historically, the error controlled by \cref{eq:delta3} has been measured in different norms, the strongest being uniform stability introduced in \cite{BE}. This corresponds to a.s. bounds in \cref{eq:delta3}. Weaker notions of stability have been studied; for instance, $L^1$ (hypothesis stability) and $L^2$ (mean square stability) in \cite{DW79b,Kale-Kumar-Vassilvitskii}, respectively. Our formulation in \cref{eq:delta3} corresponds to the weakest notion of stability, customarily called error stability, which was first introduced in \cite{Kearns-Ron}. Under uniform stability, one gets Gaussian concentration bounds. This remains true if one has sufficient control of all the moments \cite{AMS}. Under our weaker assumption on \cref{eq:delta3}, together with the new stability assumption involving $\delta_1$ and $\delta_2$, we obtain either Gaussian or exponential concentration.
}

% \subsection{Main results}
\section{Main results}
\label{sec:main}

In order to state our results, we begin with our main standing assumption:

\begin{assumption}\label{assumption:standing}
	We assume that $D \in \mathbb{R}^{(k+m)\otimes n}$ is a dataset of length $n$ whose distribution $\mu$ satisfies $\sigma^2(\mu) < \infty$. We furthermore assume that the pair $(T,L)$ is $(\delta_1,\delta_2,\delta_3)$-stable and that $\E[L(T_D(x),y)]$ has linear or quadratic growth in $z=(x,y)$.
\end{assumption}

\subsection{The Lipschitz case}

For illustration purposes, we begin with the following simplified version of our main result.
\begin{theorem}[Lipschitz case]
	\label{thm:simplified}
	{
	Suppose that \cref{assumption:standing} holds, $\delta_2(n,z)\allowbreak \equiv 0$, and $\|\nabla f\|_\infty < \infty$ with $f(z) = \E[f_i(z)]$. Then for $\eps > 3 \delta_3(n)$ we have
	\begin{align*}
		\P\left ( L_{LOO}-\hat L_{LOO} > \eps \right )
		\leq
		n \E\left [ e^{-\frac{\eps^2}{ 8 \sigma^2(\mu) \delta^2_1(n,Z)}} \right ]
		+
		e^{-\frac{\eps^2 n}{8 \sigma^2(\mu) \|\nabla f\|_\infty^2}}
		+
		e^{-\frac{(\eps/3-\delta_3(n))^2}{8 \sigma^2(\mu) \E[\delta^2_1(Z,n)]}},
	\end{align*}
	}
	Thus, if $\delta_1(n,z) \leq \frac{C}{\sqrt{n}}$, then for fixed $\eps > 0$ there is an $N_\eps$ such that, for $n \geq N_\eps$, the following exponential concentration holds:
	\begin{align}
		\label{eq:one-sided-simple}
		\P\left ( L_{{LOO}}-\hat L_{{LOO}} > \eps \right )
		\leq 3ne^{-C\eps^2 n}.
	\end{align}
\end{theorem}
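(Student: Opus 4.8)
The plan is to center the held-out losses against the data-averaged loss $f(z) = \E[f_i(z)]$ and to split $L_{LOO} - \hat L_{LOO}$ into three fluctuations, each of which I can control by the tensorized log-Sobolev inequality. Write $r := \E[L(T_D(X),Y)]$ for the expected risk and $\bar f := \E[f_i(Z_i)] = \E[f(Z)]$; by the error-stability bound \eqref{eq:delta3} these differ by at most $\delta_3(n)$. Then
\[
L_{LOO} - \hat L_{LOO} = \underbrace{\frac1n\sum_{i=1}^n\big(f(Z_i)-f_i(Z_i)\big)}_{(\mathrm I)} + \underbrace{\Big(\bar f - \frac1n\sum_{i=1}^n f(Z_i)\Big)}_{(\mathrm{II})} + \underbrace{\big(L_{LOO}-\bar f\big)}_{(\mathrm{III})}.
\]
I would distribute $\eps$ over the three pieces by a union bound, absorbing the deterministic bias $|r - \bar f| < \delta_3(n)$ into the threshold for $(\mathrm{III})$; this is exactly where the hypothesis $\eps > 3\delta_3(n)$ enters, since it keeps the resulting threshold $\eps/3 - \delta_3(n)$ positive.

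For $(\mathrm{III})$ and $(\mathrm{II})$ I would apply \cref{lemma:sgls} directly to functions of the full sample $D \sim \mu^{\otimes n}$, whose log-Sobolev constant is again $\sigma^2(\mu)$ by tensorization. For $(\mathrm{III})$, note $L_{LOO}(D) = \E_Z[L(T_D(X),Y)]$, so differentiating under the expectation and using \eqref{eq:gradient-condition} with $\delta_2\equiv 0$ together with Cauchy--Schwarz gives $\|\nabla_D L_{LOO}\| \le (\E_Z[\delta_1^2(n,Z)])^{1/2}$; hence $L_{LOO}-r$ is sub-Gaussian with variance proxy $\sigma^2(\mu)\,\E_Z[\delta_1^2(n,Z)]$, producing the third term (with its $\delta_3(n)$ offset). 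For $(\mathrm{II})$, the map $D \mapsto \frac1n\sum_i f(Z_i)$ has gradient of Euclidean norm at most $\|\nabla f\|_\infty/\sqrt n$, so it is sub-Gaussian with proxy $\sigma^2(\mu)\|\nabla f\|_\infty^2/n$, producing the second term with its characteristic factor $n$ in the exponent.

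The piece $(\mathrm I)$ is the crux, and the main obstacle: each summand $f_i(Z_i)$ depends on the data twice, through the test point $Z_i$ and through the training set $D_{(-i)}$, so I cannot usefully bound the gradient of $(\mathrm I)$ in $D$ directly. The device I would use is to condition on the test point and concentrate only in the remaining coordinates. Bounding the average by a maximum, $\frac1n\sum_i (f(Z_i)-f_i(Z_i)) \le \max_i (f(Z_i)-f_i(Z_i))$, and taking a union bound over $i$, it suffices to estimate a single term. Fixing $Z_i = z$, the map $D_{(-i)} \mapsto f_i(z) = L(T_{D_{(-i)}}(x),y)$ is a function of $D_{(-i)} \sim \mu^{\otimes(n-1)}$ with gradient norm at most $\delta_1(n,z)$ by \eqref{eq:gradient-condition}, and its conditional mean is exactly $f(z)$; \cref{lemma:sgls} then gives a conditional sub-Gaussian bound with proxy $\sigma^2(\mu)\delta_1^2(n,z)$, and integrating over $z$ yields the first term $n\,\E[e^{-\eps^2/(8\sigma^2(\mu)\delta_1^2(n,Z))}]$, the union bound being responsible for the prefactor $n$.

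Finally, for the corollary I would substitute $\delta_1(n,z) \le C/\sqrt n$, so that $\delta_1^2(n,z) \le C^2/n$ and $\E_Z[\delta_1^2(n,Z)] \le C^2/n$; each of the three exponents then carries a factor $n$, making the bounds of order $ne^{-c\eps^2 n}$, $e^{-c\eps^2 n}$, and $e^{-c(\eps/3-\delta_3(n))^2 n}$ respectively. Since $\delta_3(n) \to 0$, I would pick $N_\eps$ so that $n \ge N_\eps$ forces $\delta_3(n) < \eps/6$, which both validates $\eps > 3\delta_3(n)$ and gives $(\eps/3-\delta_3(n))^2 \ge \eps^2/36$; after relabeling the constant $C$ the three terms combine into $3n e^{-C\eps^2 n}$, as claimed. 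The term $(\mathrm I)$ remains the delicate point throughout, but its crude $n$ prefactor is harmless precisely because $n e^{-c\eps^2 n}\to 0$ once $\delta_1 = O(1/\sqrt n)$.
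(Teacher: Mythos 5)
Your proposal is correct and takes essentially the same route as the paper: the paper deduces \cref{thm:simplified} from \cref{thm:main}, whose proof uses exactly your decomposition into the three fluctuations $I_1,I_2,I_3$, the $\eps/3$ union bound, conditional concentration in $D_{(-i)}$ with $Z_i$ frozen for the first term (the union bound over $i$ giving the prefactor $n$), gradient-based sub-Gaussian bounds via the log-Sobolev constant for the other two, and absorption of the $\delta_3(n)$ bias into the threshold $\eps/3-\delta_3(n)$. The only cosmetic difference is that you inline this argument in the Lipschitz case rather than specializing the general theorem.
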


{
	\begin{remark} \label{rem:errors}
		In \cref{thm:simplified}, the right-hand side contains three error terms which control the following three errors respectively:
		\begin{align}\label{eq:decompose:error}
			\notag L_{{LOO}} - \hat L_{{LOO}}
			=&
			\frac{1}{n} \sum_{i=1}^n \left ( \E[f_i(Z_i) \mid Z_i] -f_i(Z_i)\right )
			\\
			\notag &+\frac{1}{n} \sum_{i=1}^n \left(\E[f_i(Z_i)]-\E[f_i(Z_i) \mid Z_i] \right )
			\\
			&+\frac{1}{n} \sum_{i=1}^n \left(\E[f_D(Z) \mid D]-\E[f_i(Z_i)]\right).
		\end{align}
		The first error measures how the empirical loss, as a function $f_i$, differs from the expected loss function $\E[f_i]$.
		The second error measures the difference between the Risk and the expected loss function $\E[f_i]$ evaluated at the held out point $Z_i$. The third error essentially measures the bias induced by the fact that one observation is removed together with the generalization error. Thus, we have to assume that the smallest measured deviation is bigger than the ``bias/error stability'', i.e.~$\varepsilon > 3\delta_3(n)$.
	\end{remark}
}

\begin{remark}
	By examining our proof carefully, we actually observe two-sided concentration, i.e.
	\begin{align*}
		\P\left ( |L_{{LOO}}-\hat L_{{LOO}}| > \eps \right )
		\leq 6ne^{-C\eps^2 n}.
	\end{align*}
	The same applies in the statements of \cref{thm:main,thm:simplified,thm:data-dependent}, where we multiply the estimates on the right-hand side by a factor two. Note that while \cref{eq:one-sided-simple} measures the probability of under-estimating the true risk $L_{{LOO}}$, other sided concentration
	\begin{align*}
		\P\left (  \hat L_{{LOO}} - L_{{LOO}} > \eps \right )
	\end{align*}
	measures the probability of over-estimating the true risk $L_{{LOO}}$.
\end{remark}

\subsection{The quadratic case}

\cref{thm:simplified} is a special case of our main theorem, but before presenting our main theorem in full generality, it is necessary to introduce some further simplifying notation. 

We define the following three rate functions that control the three different sources of error in the {LOO} cross validation {(see \cref{rem:errors})}: Denote $C_1 = \sigma^2(\mu)$. Then there exists a constant $C$ depending on the linear or quadratic growth of $\E[L(T_D(x),y)]$ and $C_1$, such that
\begin{align}
	\label{eq:theta1-1}
	\theta_1(n,t, z) &=e^{-\frac{t^2}{8C_1 \big[\delta_1^2(n,z) + \delta_2^2(n,z) n(\E[|X_i|^2] + 16 C_1 )\big]}} \vee e^{-\frac{t }{8C_1\delta_2(n,z)} } \\
	\label{eq:theta2-1}
	\theta_2(n,t) &=
	\begin{cases}
		e^{-\frac{t^2n}{2C}}, & \text{$\E[f_i(z)]$ has linear growth} \\
		e^{-\frac{\eps^2 n}{2C}} \vee e^{-\frac{\eps n}{2 \sqrt{C}}}, & \text{$\E[f_i(z)]$ has quadratic growth}
	\end{cases}
	\\
	\label{eq:theta3-1}
	\theta_3(n,t) & = e^{-\frac{t^2}{8C_1 \big[\E[\delta_1(n,Z)]^2 + \E[\delta_2(n,Z)]^2 n(\E[|X_i|^2] + 16 C_1 )\big]}} \vee e^{-\frac{t }{8C_1\E[\delta_2(n,Z)]} }
\end{align}
{
	\begin{remark}
		The constant $C$ in \cref{eq:theta2-1} can be quantified as follows. In the linear case, i.e.~under the assumption that $f(z) = \E[f_i(z)]$ satisfies $\|\nabla f\|_\infty < \infty$, then we can quantify $\theta_2$ as
		\begin{equation*}
			\theta_2(n,t) = e^{-\frac{n t^2}{8C_1 \|\nabla f\|_\infty^2}}.
		\end{equation*}
		For the quadratic case, we assume that $|f(z_1)-f(z_2)| \leq c_l|z_1-z_2|+c_q|z_1-z_2|^2$, and we can quantify $\theta_2$ as
		\begin{equation*}
			\theta_2(n,t) = e^{-\frac{n t^2}{8C_1 \big[c_l^2 + c_q^2(\E[|X_i|^2] + 16 C_1 )\big]}} \vee e^{-\frac{n t}{8C_1 c_q} }.
		\end{equation*}
	\end{remark}
}
Note that in the Lipschitz case we have $\delta_2(n,z)\equiv 0$, and both $\theta_1,\theta_2$ have the exponent quadratic in $t$, i.e.~the first term dominates.
On the other hand, if
\begin{align*}
	\frac{\delta_1^2}{\delta_2} + \delta_2 n \to 0,
\end{align*}
then for large enough $n$ the second term in $\theta_1,\theta_2$ will dominate. Finally, we would like to note that in order for $n\theta_1,\theta_2$ to approach $0$ for fixed $t$ as $n \to \infty$ we need
\begin{align*}
	(\delta_1^2 + \delta_2^2 n) \vee \delta_2 \leq o((\log(n))^{-1}).
\end{align*}

We note again that in many examples, we can take $\delta_1 \sim \sqrt{n}^{-1}$, $\delta_2,\delta_3 \sim n^{-1}$, and thus we can find a constant $C$ in $\theta_2$ such that
\begin{align*}
	\theta_1 \vee \theta_3 \leq \theta_2.
\end{align*}

Our main result is the following.
\begin{theorem}\label{thm:main}
	Suppose that \cref{assumption:standing} holds.
	Then for $\eps > 3 \delta_3(n)$ we have
	\begin{align*}
		\P\left ( L_{{LOO}}-\hat L_{{LOO}} > \eps \right )
		\leq
		n \E[\theta_1(n,\eps/3, Z)] + \theta_2(n,\eps/3)+\theta_3(n,\eps/3-\delta_3(n)).
	\end{align*}
\end{theorem}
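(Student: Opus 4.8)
The plan is to start from the three--term decomposition of $L_{LOO} - \hat L_{LOO}$ recorded in \cref{eq:decompose:error} and to bound each term separately after a union bound. Writing $L_{LOO}-\hat L_{LOO} = A + B + C$ for the three averages appearing there, the event $\{A+B+C > \eps\}$ is contained in $\{A > \eps/3\}\cup\{B>\eps/3\}\cup\{C>\eps/3\}$, so it suffices to bound the three probabilities by $n\E[\theta_1(n,\eps/3,Z)]$, $\theta_2(n,\eps/3)$ and $\theta_3(n,\eps/3-\delta_3(n))$ respectively. The splitting into thirds is exactly what forces the hypothesis $\eps > 3\delta_3(n)$, since the third term will only be meaningful once $\eps/3 - \delta_3(n) > 0$.

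For the first term $A = \frac1n\sum_i (\E[f_i(Z_i)\mid Z_i] - f_i(Z_i))$, the key observation is that an average of reals can exceed $\eps/3$ only if one of the summands does, so $\{A>\eps/3\}\subseteq \bigcup_i\{\E[f_i(Z_i)\mid Z_i] - f_i(Z_i) > \eps/3\}$ and a union bound reduces matters to a single fold. Fixing $Z_i$ and viewing $D_{(-i)}\mapsto f_i(Z_i)$ as a function of the remaining data, \cref{eq:gradient-condition} gives the pointwise gradient bound $\delta_1(n,Z_i) + \delta_2(n,Z_i)\|D_{(-i)}\|$; since the deleted data is distributed as a product of copies of $\mu$, which has finite log--Sobolev constant and tensorizes with the same constant, I would apply the concentration estimate for functions with linearly growing gradient to obtain the conditional tail $\theta_1(n,\eps/3,Z_i)$ of \cref{eq:theta1-1}. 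Taking expectations over $Z_i$ and summing the $n$ identically distributed folds yields $n\E[\theta_1(n,\eps/3,Z)]$.

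The remaining two terms are genuine deviations of functions of the full dataset $D$. For $B$, using that the data is i.i.d.\ one rewrites $\E[f_i(Z_i)\mid Z_i] = f(Z_i)$ with $f(z)=\E[f_i(z)]$, so $B = \E[f(Z)] - \frac1n\sum_i f(Z_i)$ is precisely the deviation of an empirical mean; its gradient as a function of $D$ is $\frac1n$ times a single $\nabla f$ in each block, hence bounded by $\|\nabla f\|_\infty/\sqrt n$ in the Lipschitz case and growing linearly in $\|D\|$ in the quadratic case, giving $\theta_2(n,\eps/3)$ through \cref{lemma:sgls} or the linearly--growing--gradient estimate. For $C$, one recognizes $\frac1n\sum_i\E[f_i(Z_i)]$ as a constant lying within $\delta_3(n)$ of $\E[L_{LOO}]$ by \cref{eq:delta3}, so that $\P(C>\eps/3)\le \P(L_{LOO}-\E[L_{LOO}] > \eps/3 - \delta_3(n))$; differentiating under the expectation, $\|\nabla_D L_{LOO}\|\le \E[\|\nabla_D L(T_D(X),Y)\|\mid D] \le \E[\delta_1(n,Z)] + \E[\delta_2(n,Z)]\|D\|$, and the same concentration estimate produces $\theta_3(n,\eps/3-\delta_3(n))$ as in \cref{eq:theta3-1}.

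The crux of the argument, and what I expect to be the main obstacle, is the concentration estimate for a function $g$ whose gradient satisfies $\|\nabla g(w)\|\le a + b\|w\|$ under a measure with $\sigma^2(\mu)<\infty$. Unlike the uniformly Lipschitz case covered directly by \cref{lemma:sgls}, the linear growth in $\|w\|$ means that the Herbst argument applied to the log--Laplace transform produces, after invoking the log--Sobolev inequality, a term of the form $\E[e^{tg}\,\|w\|^2]$ that must be controlled. Handling this coupling between $e^{tg}$ and $\|w\|^2$ is where the constants $\E[|X_i|^2]$ and $16 C_1$, together with the $\vee$ of a sub--Gaussian and a sub--exponential tail in \cref{eq:theta1-1,eq:theta3-1}, originate, and it is the delicate step that separates the two concentration regimes.
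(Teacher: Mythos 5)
Your reduction coincides with the paper's own proof: the same three-term decomposition \cref{eq:decompose:error}, the same union bound into thirds (which is indeed what forces $\eps > 3\delta_3(n)$), the same per-fold union bound and conditioning on $Z_i$ for the first term, and the same gradient bounds $\E[\delta_1(n,Z)] + \E[\delta_2(n,Z)]\|D\|$ for the risk term. The only real deviation is minor: for the quadratic-growth case of the empirical-mean term you propose reusing the gradient-based concentration estimate, whereas the paper invokes the Bernstein-type bound of \cref{thm:subexp-concentration} for averages of sub-exponential variables; both routes yield the two-regime form of $\theta_2$, so this is a workable and arguably more unified choice.

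The genuine gap is precisely the estimate you flag as ``the main obstacle'' and then leave unproved: the tail bound for a function $g$ of the i.i.d.~sample satisfying $\|\nabla g(w)\| \leq a + b\|w\|$ under a log-Sobolev measure. This is the paper's \cref{lemma:quadratic} and \cref{cor:quadconc}; it is invoked three times in your argument (for $\theta_1$, for $\theta_3$, and in your variant also for $\theta_2$), and without it the specific constants in \cref{eq:theta1-1,eq:theta3-1} --- the term $\delta_2^2(n,z)\, n(\E[|X_i|^2]+16C_1)$ and the $\vee$ with the sub-exponential branch $e^{-t/(8C_1\delta_2)}$ --- cannot be derived, so the theorem as stated is not established. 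Moreover, the route you sketch is the one that gets stuck: running Herbst on $\E[e^{\lambda g}]$ and applying the log-Sobolev inequality to $e^{\lambda g/2}$ produces $\Ent(e^{\lambda g}) \leq \tfrac{1}{2}\sigma^2\lambda^2\,\E\big[(a+b\|w\|)^2 e^{\lambda g}\big]$, and decoupling $\|w\|^2$ from $e^{\lambda g}$ in this expression is exactly the difficulty you name without resolving. The paper avoids Herbst altogether: it uses the Bobkov--G\"otze exponential-integrability inequality \cref{eq:key}, i.e.~$\E[e^{f-\E f}] \leq \E[e^{2\sigma^2\|\nabla f\|^2}]$ (see \cite{B-G}), whose right-hand side involves only the gradient and no coupling with $e^{f}$; independence then factorizes $\E[e^{c\lambda^2\sum_i |X_i|^2}]$ into one-dimensional sub-exponential moments of squares of sub-Gaussian variables (\cref{e:subexp}), and the restriction $\lambda \lesssim \delta_2^{-1}(n)$ required for finiteness of those moments is what generates the second, sub-exponential regime in $\theta_1$ and $\theta_3$ via the truncated Chernoff optimization in \cref{cor:quadconc}. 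To complete your proof you would either need to reproduce this decoupling argument or carry out the Herbst computation with the coupled term controlled; as written, the crux is identified but not crossed.
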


\begin{remark}
	If we assume that the loss function $L$ is Lipschitz in its first component, i.e.
	\begin{align*}
		|L(A_1,B)-L(A_2,B)| \leq C\|A_1-A_2\|,
	\end{align*}
	then the Lipschitz condition on the estimator $T$ implies Lipschitz condition, i.e.~$\delta_2(n,z) \equiv 0$ in \cref{assumption:standing}.
\end{remark}

\subsection{Extensions beyond quadratic: conditioned concentration}

In many cases of interest the rates $\delta_1(n,z)$ and $\delta_2(n,z)$ might depend on the data $D$ as well. 
{
	Let us consider an example to highlight this fact:
	\begin{example}\label{example:linear:bad}
		Consider the case of simple linear regression, i.e.~for a dataset $D$ of length $n$ we let $T_D(x)$ be the estimated linear function and let the loss be the standard $L^1$ loss. We will see in \cref{sec:examples} that $L(T_D(x),y)$ does not satisfy \cref{assumption:standing} on the space of all possible data configurations. To understand the issue without going too much into detail, we note that when the $X$ coordinates of the data are tightly grouped together, then the slope of $T_D(x)$ will change very rapidly and thus violate \cref{eq:gradient-condition} unless we let $\delta_1$ and $\delta_2$ also depend on $D$.
	\end{example}

	In order to remedy the issue highlighted in \cref{example:linear:bad} we will give an extension of \cref{thm:main} that is true when we condition on the data restricted to a set where the algorithm behaves nicely.
}

We end this section with a result that goes beyond Lipschitz continuity and log-Sobolev. Now the constant $\sigma^2(\mu)$ is not stable w.r.t.~restrictions but $\sigma_{SG}^2(\mu)$ is: Specifically by \cite[Theorem 1]{Bobkov-et-al}, for a set $A$ such that $\mu(A) > 0$, the restricted measure
\begin{align*}
	\mu_A(B) = \frac{\mu(A \cap B)}{\mu(A)}
\end{align*}
satisfies
\begin{align} \label{eq:BNT}
	\sigma_{SG}^2(\mu_A) \leq c \log\left ( \frac{e}{\mu(A)} \right ) \sigma_{SG}^2(\mu),
\end{align}
{where $c = 3 \cdot 2^{12} e^2$.}

The following result can be achieved as \cref{thm:main} by conditioning. Although the statement looks cumbersome, it is applicable in more situations, for instance when we cannot guarantee the Lipschitz condition everywhere, but only in a large part of the space.

Let $K \subset \R^{(k+m) \otimes n}$ be a permutation symmetric set, and let
\begin{align*}
	C_1 &= c\log \left (\frac{e}{\mu^{\otimes n}(K)} \right ) \sigma^2(\mu), \\
	C_2 &= 2 c \sigma^2(\mu),
\end{align*}
where $c$ is from \cref{eq:BNT}. Then we define modified rate functions as
\begin{align*}
	\hat \theta_{1,K}(n,\eps,z) &:= \exp \left (-\frac{\eps^2}{16 C_2 \delta_{1,K}^2(n,z)} \right ) \\
	\theta_{2,K}(n,\eps) &:= \exp \left (-\frac{\eps^2 n}{8 C_1} \right ) \\
	\theta_{3,K}(n,\eps) &:=\exp \left (-\frac{(\eps/6-\delta_3(n))^2}{8C_1\E[\delta_{1,K}(n,Z)]^2} \right ),
\end{align*}
where $\delta_{1,K}(n,z)$ is a function such that
\begin{align*}
	\|\nabla_D f_D(z)\| \leq \delta_{1,K}(n,z), \quad D \in K,
\end{align*}
with $f_D(z) = L(T_D(x),y)$, $z = (x,y)$.
We further need a modified version of $\delta_3$, which we state as
\begin{align*}
	|\E[f_D(Z'_i) \mid D \in K, D' \in K] - \E[f(Z_i) \mid D \in K]| \leq \delta_{3,K}(n),
\end{align*}
with $f(z) = \E[f_i(z)] =\E[L(T_{D_{(-i)}}(x),y)]$,
and finally
we need $\gamma(K)$ that is defined as
\begin{align*}
	\gamma(K) = \E[f_D(Z)]- \E[f_D(Z'_1) \mid D \cup D' \in K],
\end{align*}
where $D'$ is an independent copy of $D$.
\begin{theorem}
	\label{thm:data-dependent}
	Let $D \in \mathbb{R}^{(k+m)\otimes n}$ be a dataset of length $n$ with distribution $\mu^{\otimes n}$. For any $\eps > 6 (\delta_{3,K}(n) \wedge \gamma(K))$ we have
	\begin{multline*}
		\P\left ( L_{{LOO}} - \hat L_{{LOO}} > \eps \right ) \leq n \E\hat \theta_{1,K}(n,\eps/6,Z)
		+\theta_{2,K}(n,\eps/6)
		+\theta_{3,K}(n,\eps/6-\delta_{3,K}(n))\\
		+\theta_{3,K}(n,\eps/6-\gamma(K))
		+\P(D \not \in K)\\
		+\P(\mu^{\otimes (n-1)}(K_1(Z_1)) < 2^{-1} \mid D \in K),
	\end{multline*}
	where $K_1(z) = K \cap \{Z_1 = z\}$ is the $z$ slice of $K$.
\end{theorem}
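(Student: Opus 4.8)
The plan is to follow the proof of \cref{thm:main} step by step, but to perform every concentration estimate under the conditional law $\mu^{\otimes n}(\,\cdot \mid D \in K)$, which equals the restricted measure $\mu^{\otimes n}_K$. First I would peel off the bad event via
\[
  \P\!\left( L_{LOO} - \hat L_{LOO} > \eps \right)
  \le \P\!\left( L_{LOO} - \hat L_{LOO} > \eps \,\middle|\, D \in K \right) + \P(D \notin K),
\]
which produces the $\P(D\notin K)$ term and leaves a statement about the conditioned ensemble. The essential difficulty is that conditioning ruins the product structure, so the clean identity $\sigma^2(\mu^{\otimes n})=\sigma^2(\mu)$ is no longer available; the workaround is to abandon the full log-Sobolev constant in favour of the sub-Gaussian constant, which by \cref{lemma:sgls} together with the restriction inequality \eqref{eq:BNT} obeys $\sigma_{SG}^2(\mu^{\otimes n}_K) \le c\log(e/\mu^{\otimes n}(K))\,\sigma^2(\mu) = C_1$. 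This is exactly the constant appearing in $\theta_{2,K}$ and $\theta_{3,K}$ and is the source of the logarithmic degradation.

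Next I would invoke the decomposition \eqref{eq:decompose:error} into the three error terms and bound each tail separately, splitting the deviation $\eps$ into pieces of size $\eps/6$. The middle term is an average of a \emph{fixed} function evaluated at the data points, whose gradient in $D$ scales like $n^{-1/2}$; the restricted concentration of measure with constant $C_1$ then yields $\theta_{2,K}(n,\eps/6)$ at once. The third term equals $L_{LOO} = \E[f_D(Z)\mid D]$ minus a deterministic target, and $L_{LOO}$ is itself a function of $D$ with gradient bounded by $\E[\delta_{1,K}(n,Z)]$ on $K$, hence concentrates with constant $C_1$ as well; the delicacy here is reconnecting its \emph{conditional} mean to the true risk, which costs two separate bias corrections---the conditional error stability $\delta_{3,K}(n)$ and the conditioning bias $\gamma(K)$ relating the restricted and unrestricted risks (the independent copy $D'$ entering both definitions signals the decoupling step that splits this analysis in two). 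This is precisely why the statement carries the two shifted factors $\theta_{3,K}(n,\eps/6-\delta_{3,K}(n))$ and $\theta_{3,K}(n,\eps/6-\gamma(K))$ and why the hypothesis reads $\eps > 6(\delta_{3,K}(n)\wedge\gamma(K))$.

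The first error term is the genuinely delicate one and is where the slice $K_1(z)$ enters. Since an average can exceed $\eps/6$ only if one of its summands does, a union bound over the $n$ held-out indices together with the permutation symmetry of $K$ reduces the problem to a single centered term $\E[f_1(Z_1)\mid Z_1] - f_1(Z_1)$. Conditioning further on $Z_1 = z$, the residual randomness is $D_{(-1)}$ drawn from the slice measure $\mu^{\otimes(n-1)}_{K_1(z)}$, on which $D_{(-1)}\mapsto f_1(z)$ is $\delta_{1,K}(n,z)$-Lipschitz. Here I would apply \eqref{eq:BNT} to the slice: whenever $\mu^{\otimes(n-1)}(K_1(z)) \ge \tfrac12$ its sub-Gaussian constant is at most $c\log(2e)\,\sigma^2(\mu) \le C_2$, and the sub-Gaussian tail for a Lipschitz function---applied for each fixed $z$, so that the exponential rate may be chosen after conditioning on $z$---gives the per-slice Gaussian bound $\hat\theta_{1,K}(n,\eps/6,z)$. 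Integrating in $z$ produces $n\,\E\,\hat\theta_{1,K}(n,\eps/6,Z)$. On the complement, where the slice is too thin for \eqref{eq:BNT} to be of any use, I would bound the probability trivially by one, yielding the final term $\P(\mu^{\otimes(n-1)}(K_1(Z_1)) < 2^{-1}\mid D\in K)$.

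I expect the main obstacle to be exactly this loss of tensorization under restriction: one must replace $\sigma^2$ by $\sigma_{SG}^2$, accept the $\log(e/\mu^{\otimes n}(K))$ price from \eqref{eq:BNT}, and---most critically for the first term---ensure that the one-dimensional slices $K_1(z)$ on which the gradient bound holds are not collapsed to negligible mass; the threshold $\tfrac12$ and its companion slice-probability term are the honest bookkeeping of this point. A secondary but unavoidable nuisance is that the slice concentration centers $f_1(z)$ at its slice-conditional mean rather than the unconditional mean used in \eqref{eq:decompose:error}, so these centering discrepancies must be carried consistently through the bias terms rather than quietly dropped.
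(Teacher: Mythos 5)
Your proposal is correct and follows essentially the same route as the paper's proof: peeling off $\P(D\notin K)$ by conditioning, decomposing the conditioned error into the three terms with conditional centerings, invoking \cref{eq:BNT} to control the sub-Gaussian constants of the restricted measure (giving $C_1$) and of the slices $K_1(z)$ under the mass-$\tfrac12$ threshold (giving $C_2$ and the residual slice-probability term), and accounting for the two bias corrections $\delta_{3,K}(n)$ and $\gamma(K)$ via the decoupled copy $D'$. The only cosmetic difference is that the paper first introduces the intermediate quantity $L_{{LOO},K}=\E[f_D(Z_1')\mid D, D'\in K]$ and splits $\eps$ as $\eps/2+\eps/2$ before subdividing into $\eps/6$ pieces, which is exactly the bookkeeping you describe for carrying the conditional-versus-unconditional centering discrepancies through the $\theta_{3,K}$ terms.
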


\begin{remark}
	If we now have the situation such that
	\begin{align}
		\label{eq:data-dependent-condition}
		\|\nabla_D f_D(z)\| \leq \delta_1(n,z,D) + \delta_2(n,z,D)\|D\|,
	\end{align}
	we can, when restricted on $K$, write it as
	\begin{align*}
		\|\nabla_D f_D(z)\| \leq \text{diam}(K)(\delta_1(n,z,D) + \delta_2(n,z,D)),
	\end{align*}
	where $\text{diam}(K)$ is the diameter of the set $K$, provided that $K$ is compact.
\end{remark}
In practice \cref{thm:data-dependent} can be applied by choosing a set $K$ growing in $n$. Then possible growth in $\delta_{1,K}$ and $\delta_{2,K}$ in terms of the size of $K(n)$ makes the rates $\theta_{1,K}$ and $\theta_{3,K}$ slightly worse, while now $\mathbb{P}(D \notin K)$ decays exponentially by the log-Sobolev assumption. Hence, one can optimize by choosing suitable $K = K(n)$, depending on the situation.

{
	\begin{remark}
		The log-Sobolev inequality does not in general hold for bounded data, and as such our result focuses on extending concentration bounds beyond known results for bounded data. In our case, the tail behavior is contained in the log-Sobolev constant and in the quantity $\P(D \notin K)$, see also \cref{sec:conclusion}.
	\end{remark}
}

\section{Examples} \label{sec:examples}

\subsection{The empirical mean}
Consider the empirical mean $\overline X_n$. In this case we can consider $f_D(z) = L(\overline{X}_n,z) := |\overline X_n - z|$. Then, by the triangle inequality and Hölder inequality, we get
\begin{align*}
	|f_{D_1}(z)-f_{D_2}(z)|\leq |\overline{X}_{n,1}-\overline{X}_{n,2}|\leq \frac{\|D_1-D_2\|}{\sqrt{n}}
\end{align*}
which gives us $\|\nabla_D f_D(z)\| \leq \frac{1}{\sqrt{n}}$.
With $f_i(z) = L(T_{D_{(-i)}}(x),y)$ we also have
\begin{align*}
	|\E[f_i(Z_i)] - \E[f_D(Z)]| \leq \E|\overline{X}_n - \overline{X}_{n-1}| \leq 2 \frac{\E|X_1|}{n-1}
\end{align*}
allowing us to choose $\delta_3(n) = \frac{C}{n-1}$.
Hence, we obtain, by \cref{thm:simplified},
\begin{align*}
	\P\left ( L_{{LOO}}-\hat L_{{LOO}} > \eps \right ) \leq 3ne^{-C\eps^2 n}
\end{align*}
for large enough $n$. We note that by using permutation symmetry of $\overline{X}_n$, one can improve the upper bound to be $ c_1e^{-c_2\eps^2 n}$. Using our general approach without extra information on permutation symmetry, we obtain the multiplier $n$ in front of the exponential term.

\subsection{$Z$ dependent Lipschitz constant}
Let $L(x,y) = (x-y)^2$. Then for $f_D(z) = L(T_D(x),y) = (T_D(x)-y)^2$ from which it follows that
\begin{align*}
	\|\nabla_D f_D(z)\| = \|\nabla_D L(T_D(x),y))\| \leq 2\|\nabla_D T_D(x)\||y-T_D(x)|.
\end{align*}
If now $T_D(x)$ has linear growth in $x$ and $\|\nabla_D T_D(x)\| \leq \delta_1(n)$, it follows that
\begin{align*}
	\|\nabla_D f_D(z)\| \leq C\delta_1(n)(|y|+\|x\|):= \delta_1(n,z).
\end{align*}

This provides an example where $\delta_1(n,z)$ depends on $z$ as well, highlighting the fact that in general the $z$-dependence should be taken into account.

\subsection{Kernel density estimation} \label{ssec:density}
Consider kernel density estimation with given kernel $K$ in $\mathbb{R}$, for simplicity. Here $g$ is our true density function and $\hat g$ is our estimate given by
\begin{align*}
	\hat{g}(x) = \frac{1}{n}\sum_{j=1}^n K_h(x-x_j) = \frac{1}{nh}\sum_{j=1}^n K\left(\frac{x-x_j}{h}\right),
\end{align*}
where $h$ is a suitably chosen parameter (typically one chooses $h$ to depend on $n$ as well).
A common loss is the integrated $L^2$ loss, i.e.
\begin{align*}
	L(g,\hat g) = \int_{\mathbb{R}} (g(x)-\hat g(x))^2dx.
\end{align*}
The risk is then given by $R(g,\hat g) = \E[L(g,\hat g)]$.
Expanding the expression for the loss, we get
\begin{align*}
	L(g,\hat g) = \int_{\mathbb{R}} g^2(x)dx - 2\int_{\mathbb{R}} g(x) \hat g(x)dx + \int_{\mathbb{R}} \hat g^2(x)dx.
\end{align*}
In order to estimate the above, one typically omits the deterministic parts and instead focus on estimating the random parts, i.e.~the last two integrals. The last term is computable, while the second term equals  $2\E[\hat g(X)\mid D]$ which we need to estimate. One way to do this is to use leave-one-out estimation.
In this case we have $f_i(z) = \hat g_{D_{(-i)}}(z)$. We obtain (provided that $K$ is differentiable kernel with bounded derivative)
\begin{align*}
	|\nabla_{x_k} \hat g_{D_{(-i)}}(z)| = \frac{1}{n}| K_h'(x-x_k)| \leq \frac{C}{h n},
\end{align*}
leading to
\begin{align*}
	\|\nabla_{D_{(-i)}} \hat g_{D_{(-i)}}(z)\| \leq \frac{C}{h\sqrt{n}}.
\end{align*}
That is, we have $\delta_1(n,z) = C/[h\sqrt{n}]$ and $\delta_2(n,z) \equiv 0$.
Furthermore, now $\E[f_i(Z_i)] = \E[f_D(Z)]$, so $\delta_3(n) = 0$. Hence, we may apply \cref{thm:main} with $\delta_1(n,z) = C/[h\sqrt{n}]$.

\subsection{Linear regression} \label{ssec:linear}
An example which requires \cref{thm:data-dependent} is simple linear regression. Consider the model $y = \alpha + \beta x + \eps$, where $x \sim N(0,1)$ is a random variable and $\eps \sim N(0,1)$. Then the least squares estimators for parameters $\alpha$ and $\beta$ are given by
\begin{align*}
	\hat \alpha &= \overline{y} - \hat\beta \overline{x} \\
	\hat\beta &= \frac{\sum_{i=1}^n (x_i-\overline{x})(y_i-\overline{y})}{\sum_{i=1}^n (x_i-\overline{x})^2}.
\end{align*}
Consider also the $L^1$-loss, i.e.~$L(x,y) = |x-y|$.

{ We will now show how to apply \cref{thm:data-dependent} to this problem and thus showing that there exists constants $C,c$ such that for a fixed $\eps$ we get
\begin{equation}\label{eq:linear:poop}
	\P\left ( L_{{LOO}} - \hat L_{{LOO}} > \eps \right ) \leq C n e^{-cn}.
\end{equation}
For this we need to estimate the gradient of the different terms involved in the loss, find a good choice of the set $K$, and finally estimate $\delta_{1,K}$, $\delta_{2,K}$ and $\delta_{3,K}$.
We begin by computing the gradient of the correlation estimator $\hat \beta$.}
Now finally, let $X = \{(x_i-\overline x)\}_i$ and $Y = \{(y_i-\overline y)\}_i$, then $\hat \beta = \frac{X \cdot Y}{\|X\|^2}$.
Now,
\begin{align*}
	\nabla_X \hat \beta = \nabla_X \frac{X \cdot Y}{\|X\|^2} = \frac{Y}{\|X\|^2} - \frac{X X \cdot Y}{\|X\|^4},
\end{align*}
and hence
\begin{align*}
	\left \| \nabla_X \hat \beta \right \| \leq 2 \frac{\|Y\|}{\|X\|^2}, \quad \left \| \nabla_Y \hat \beta \right \| \leq \frac{1}{\|X\|}, \quad \nabla_{D} \hat \beta = (\nabla_D X \cdot \nabla_X \hat \beta, \nabla_D Y \cdot \nabla_Y \hat \beta).
\end{align*}

Consequently, the derivative of the predictor $\hat \alpha + \hat \beta x$ is thus
\begin{align*}
	\|\nabla_D (\hat \alpha + \hat \beta x)\| &\lesssim \|\nabla_D \hat \alpha\| + \|\nabla_D \hat \beta\||x| %\lesssim \frac{1}{\sqrt{n}} + \|\nabla_D \hat \beta\|\left (\frac{\|X\|}{\sqrt{n}} + |x|\right ) \\
	\lesssim
	\frac{1}{\sqrt{n}} + \frac{\|X\| + \|Y\|}{\|X\|^2}\left (\frac{\|X\|}{\sqrt{n}} + |x|\right ).
\end{align*}
{The issue with the above derivative is that it is unbounded, namely when $\|X\|$ gets arbitrarily small or $\|Y\|$ arbitrarily large. On the other hand, conditioned on the event that $\|X\|^2$ and $\|Y\|^2$ are close to their expectations, we obtain that the natural scaling of all terms involved is $\frac{1}{\sqrt{n}}$.}
Consider now the permutation symmetric set $K_\epsilon$ defined as
\begin{align*}
	K_\eps:= \left \{ \left |\frac{1}{n}\|X\|^2 - 1 \right | < \eps, \left |\frac{1}{n}\|Y\|^2 - 1 \right | < \eps\right \}.
\end{align*}
Then on the set $K_\eps$ we have
\begin{align*}
	\|\nabla_D (\hat \alpha + \hat \beta x)\| \lesssim \frac{(1+\eps)}{(1-\eps)\sqrt{n}}\left ((1+\eps) + |x|\right ).
\end{align*}
Also, from \cref{thm:subexp-concentration} we get
\begin{align*}
	\P(D \in K_\eps) \geq 1-4e^{-\eps n/C}.
\end{align*}
We now wish to apply \cref{thm:data-dependent} and as such, all that remains is to verify the slice-wise size of $K_\epsilon$. Namely, we need to estimate
\begin{align*}
	\P(\mu^{\otimes (n-1)}(K_{\eps,1}(Z_1)) < 2^{-1} \mid D \in K_\eps)
\end{align*}
i.e.~if $D'$ is an independent copy of $D_{n-1}$ with $Z = (X,Y)$, then
\begin{align*}
	\P(\P(D' \in K_{\eps,1}(Z_1) \mid Z_1) < 1/2 \mid D \in K_\eps).
\end{align*}

We will now only deal with, for $z = (x,y)$, the part
\begin{align*}
	K_{\eps}(z):=\left \{X' \in \R^{n-1}: X=(x,X'), \left |\frac{1}{n} \|X\| - 1 \right | < \eps \right \},
\end{align*}
as the $Y$ part can be treated similarly. Now, the condition that $X' \in K_\eps(z)$ can be written as
\begin{align*}
	\frac{n}{n-1}\left (1-\eps-x^2/n \right ) \leq \frac{1}{n-1} \sum_{i=1}^{n-1} (X'_i)^2 \leq \frac{n}{n-1}\left (1+\eps-x^2/n \right ).
\end{align*}
That is,
\begin{align*}
	\P(X' \in K_{\eps}(z)) = 1-\P(X' \not \in K_{\eps}(z))
\end{align*}
and, by \cref{thm:subexp-concentration},
\begin{align*}
	\P(X' \not \in K_{\eps}(z)) =& \P\left(\frac{1}{n-1} \sum_{i=1}^{n-1} x_i^2 -1> \frac{1}{n-1}\left (1 + n\eps - x^2 \right )\right)
	\\
	&+ \P\left(\frac{1}{n-1} \sum_{i=1}^{n-1} x_i^2 -1< -\frac{1}{n-1}\left (n\eps + x^2-1) \right )\right)
	\\
	\leq& e^{-\left (n\eps + x^2-1 \right )} + e^{-\left (1 + n\eps - x^2 \right )} \leq
	4 e^{-\left (n\eps - x^2 \right )}.
\end{align*}
Finally, we estimate
\begin{multline*}
	\P(\P(X' \in K_{\eps}(Z_1) \mid Z_1) < 1/2 \mid D \in K_\eps)\\
	=1-\P(\P(X' \not \in K_{\eps}(Z_1) \mid Z_1) < 1/2 \mid D \in K_\eps) \\
	\leq
	1-\P(4 e^{-\left (n\eps - X_1^2 \right )} < 1/2 \mid D \in K_\eps)
\end{multline*}
and
\begin{align*}
	\P(4 e^{-\left (n\eps - X_1^2 \right )} \geq 1/2 \mid D \in K_\eps) &=
	\P(e^{X_1^2} \geq 8^{-1}e^{n\eps} \mid D \in K_\eps)
	\\
	&=\P(X_1^2 \geq \log(8^{-1}) + n\eps \mid D \in K_\eps)
	\\
	&\leq
	2\frac{\P(X_1 \geq \sqrt{\log(8^{-1}) + n\eps})}{\P(D \in K_\eps)}
	\\
	&\leq 2\frac{e^{-(\log(8^{-1}) + n\eps)}}{\P(D \in K_\eps)}
	\\
	&\leq
	2\frac{e^{-(\log(8^{-1}) + n\eps)}}{1-4e^{-\eps n/C}}.
\end{align*}
Hence, we may apply \cref{thm:data-dependent} using the estimates above which leads to \cref{eq:linear:poop}.

\subsection{Truncation}
{Many standard estimators are almost stable. It seems that they still concentrate but are not covered by our result. We here provide a simple method to stabilize them via truncation allowing us to apply our results.}
Consider the truncator $g_b (x) = \max(\min(x,b),-b)$ and consider $D_b = \{(g_b(X_1),g_b(Y_1)),\ldots,(g_b(X_n),g_b(Y_n))\}$. Now, if
\begin{align*}
	\|\nabla f[D]\| \leq C\frac{1}{n^{(q+1)/2}}\|D\|^q,
\end{align*}
then
\begin{align*}
	\|\nabla_{D} f[D_b]\| \leq C \frac{b^q}{\sqrt{n}}.
\end{align*}
%Consequently, we can stabilize most estimators by truncation and get optimal concentration, as the following two examples show.

\subsection{Stabilized Linear regression} \label{ssec:stabilized_linear}
{As opposed to using our \cref{thm:main} and conditioning on the sets $K_\epsilon$ we can instead alter the linear regression method to stabilize it. It should be noted that in many standard cases, the effect of stabilization is small.}
Consider the stabilized estimators for parameters $\alpha$ and $\beta$ as
\begin{align*}
	\hat \alpha = \overline{y} - \hat\beta \overline{x}
\end{align*}
and let $g_b(y) = \max(\min(y,b),-b)$ be the $b$ level truncation. Let $\tilde y_i = g_b(y_i)$, and define the stabilized slope as
\begin{align*}
	\hat\beta = \frac{\sum_{i=1}^n (x_i-\overline{x})(\tilde y_i-\overline{\tilde y})}{\sum_{i=1}^n (x_i-\overline{x})^2 + n \delta},
\end{align*}
where $\delta > 0$ is a stabilization parameter.
Consider also the $L^1$-loss, i.e.~$L(x,y) = |x-y|$. We get as in \cref{ssec:linear} that
for $\delta < 1$, it holds
\begin{align*}
	\|\nabla_{D} \hat \beta\| \leq C \frac{b}{\sqrt{n}\delta}
\end{align*}
which, as in \cref{ssec:linear}, leads to
\begin{align*}
	\|\nabla_D f_D(z)\| \lesssim \frac{1}{\sqrt{n}} + \frac{\|D\|}{n}(|x|+b).
\end{align*}
This allows to apply \cref{thm:main}.

\subsection{Stabilized kernel regression} \label{ssec:kernel}

Consider a stabilized form of kernel regression. For simplicity, let $g(X) = \E[Y \mid X]$ be the regression function. Let $T_D$ be an estimate of $g(X)$ produced as the kernel regression estimate on the dataset $D$, and consider the $L^1$ prediction loss. That is, let
$
L(T_D(x),y) = |T_D(x)-y|
$
and define the stabilized kernel regression estimator as
\begin{align*}
	T_D(x) = \frac{\sum_{j=1}^{n} K_h(x-x_j) y_j}{\sum_{j=1}^n K_h(x-x_j) + n\delta}
\end{align*}
with a tuning parameter $h$.
Computing first the $x_i$ derivative gives
\begin{align*}
	\nabla_{x_i} T_D(x) & = \frac{-K_h'(x-x_i) y_i}{\sum_{j=1}^n K_h(x-x_j)+n\delta}+\frac{\sum_{j=1}^{n} K_h(x-x_j) y_j (K_h'(x-x_i))}{(\sum_{j=1}^n K_h(x-x_j)+n\delta)^2} \\
	&=\frac{K_h'(x-x_i)}{\sum_{j=1}^n K_h(x-x_j)+n\delta} \left (-y_i+T_D(x)\right ).
\end{align*}
If now $K$ is the Gaussian kernel, we get
\begin{align*}
	\frac{K_h'(x-x_i)}{\sum_{j=1}^n K_h(x-x_j)+n\delta} = \frac{x-x_i}{h^2}\frac{K_h(x-x_i)}{\sum_{j=1}^n K_h(x-x_j)+n\delta}
\end{align*}
and hence, for a fixed $h$, we get
\begin{align*}
	\frac{K_h'(x-x_i)}{\sum_{j=1}^n K_h(x-x_j)+n\delta} = \frac{x-x_i}{h^2}\frac{K_h(x-x_i)}{\sum_{j=1}^n K_h(x-x_j)+n\delta} \leq C(h,\delta) \frac{1}{n}.
\end{align*}
Since now $|T_D(x)|\leq C\|D\|/\sqrt{n}$, we obtain
\begin{align*}
	\left|\frac{1}{n} \left (-y_i+T_D(x)\right )\right| \leq \frac{C|y_i|}{n} + \frac{C\|D\|}{n\sqrt{n}}.
\end{align*}
Similarly, derivatives in $y_i$ directions satisfy
\begin{align*}
	|\nabla_{y_i} T_D(x)| & = \left|\frac{K_h(x-x_i)}{\sum_{j=1}^n K_h(x-x_j)+n\delta}\right|\leq \frac{C(h,\delta)}{n}.
\end{align*}
From this it follows that, using also $\sum_{j=1}^n |y_j| \leq \sqrt{n}\|D\|$, we have
\begin{align*}
	\|\nabla_D T_D(x)\|^2 \leq & C(h,\delta)\sum_{j=1}^n \left[\left(\frac{|y_i|}{n} + \frac{\|D\|}{n\sqrt{n}}\right)^2 + \frac{1}{n^2}\right] \\
	= & C\sum_{j=1}^n \left[\frac{|y_i|^2}{n^2} + \frac{2|y_i|\|D\|}{n^2\sqrt{n}} + \frac{\|D\|^2}{n^3} + \frac{1}{n^2}\right] \\
	\leq & \frac{C}{n} + \frac{C\|D\|^2}{n^2}.
\end{align*}
Hence, we observe that, for all $x$,
\begin{align*}
	\|\nabla_D T_D(x)\| \leq \frac{C}{\sqrt{n}} + \frac{C\|D\|}{n}.
\end{align*}
As such, we may set $\delta_1(n,z) = \frac{C}{\sqrt{n}}$ and $\delta_2(n,z) = \frac{C}{n}$.
For $\delta_3$ we compute for $f_i$, $f_D$ defined as usual,
\begin{align*}
	\E&[f_i(Z_i)] - \E[f_D(Z)] \\
	&= \E\left [\E \left [ \left |\frac{\sum_{j=1, j\neq i}^{n} K_h(X-x_j) y_j}{\sum_{j=1,j\neq i}^n K_h(X-x_j)+n\delta} - Y \right | - \left |\frac{\sum_{j=1}^{n} K_h(X-x_j) y_j}{\sum_{j=1}^n K_h(X-x_j)+n\delta} - Y \right | \mid Z \right]\right]
	\\
	&\leq \E\left [\E \left [ \left |\frac{\sum_{j=1, j\neq i}^{n} K_h(X-x_j) y_j}{\sum_{j=1,j\neq i}^n K_h(X-x_j)+n\delta} - \frac{\sum_{j=1}^{n} K_h(X-x_j) y_j}{\sum_{j=1}^n K_h(X-x_j)+n\delta}\right | \mid Z \right]\right]
	\\
	& \leq \frac{C}{\sqrt{n}},
\end{align*}
where in the last line we have used Lipschitz continuity of $T_D(x)$ in the $y_i$ direction. This allows us to set $\delta_3(n) = \frac{C}{\sqrt{n}}$ and hence we may again apply \cref{thm:main}.

\section{Numerical experiments}
\label{sec:numerical}
{
	The aim of this section is to provide numerical evidence for the rate at which the level sets of the error $L_{{LOO}} - \hat L_{{LOO}}$ tend to zero w.r.t.~$n$. Namely, for a given $\epsilon > 0$ we are interested in
	\begin{align*}
		\P(|L_{{LOO}} - \hat L_{{LOO}}| > \eps).
	\end{align*}
	Furthermore, we provide simulations of the rate of the standard deviation, since if the probability above is bounded by $C e^{-c n \eps^2}$ for some constants $c,C$ and all $\eps > 0$, then the standard deviation would converge to $0$ with rate $1/\sqrt{n}$, which is what one would expect. However, the bounds appearing in \cref{thm:simplified,thm:main,thm:data-dependent} all holds for large enough $\eps$, and as such is purely of large deviation type. 
	Note also that, the probability above should be linear in $n$ in the log-scale, provided that the upper bounds are relatively sharp.
	
	In the following we have provided experimental data on the case of linear  regression, kernel density estimation and that of the stabilized kernel regression.
}

\subsection{Kernel density estimation}
In this context, referring back to \cref{ssec:density}, our goal is to estimate the density $g$ by using the dataset $D$. In order to evaluate the risk we need to compute
\begin{align*}
	L_{{LOO}} := \E[\hat g_{D}(Z) \mid D],
\end{align*}
where $Z$ is an independent copy of $Z_1 \in D$. The estimator of this is the leave-one-out estimator, i.e.
\begin{align*}
	\hat L_{{LOO}}:= \frac{1}{n} \sum_{i=1}^n f_i(Z_i),
\end{align*}
where $f_i(z) = \hat g_{D_{(-i)}}(z)$. In \cref{ssec:density} we verified that $\delta_1(n,Z) \approx \frac{1}{\sqrt{n}}$, which then gives us exponential concentration via \cref{thm:main}. For numerical illustration, let $X \sim \text{Uniform}(0,1)$ and consider the random variable $Y = \sin(10X)$. Now our goal is to estimate the density of $Y$, which we do by using the Gaussian kernel with a bandwidth $1/10$.

In \cref{fig:density} there are two plots. The first plot shows the value of $\hat L_{{LOO}} - L_{{LOO}}$ for different choices of $n$ (the $x$-axis), and {we can see that it indicates the rate $1/\sqrt{n}$ of the standard deviation}. The second plot shows the empirical value of $\P(|\hat L_{{LOO}} - L_{{LOO}}| > .02)$ on the log-scale with respect to $n$. The second plot shows linear relationship as it should, by \cref{thm:main}.

\begin{figure}
	\centering
	\includegraphics[width=0.3\columnwidth]{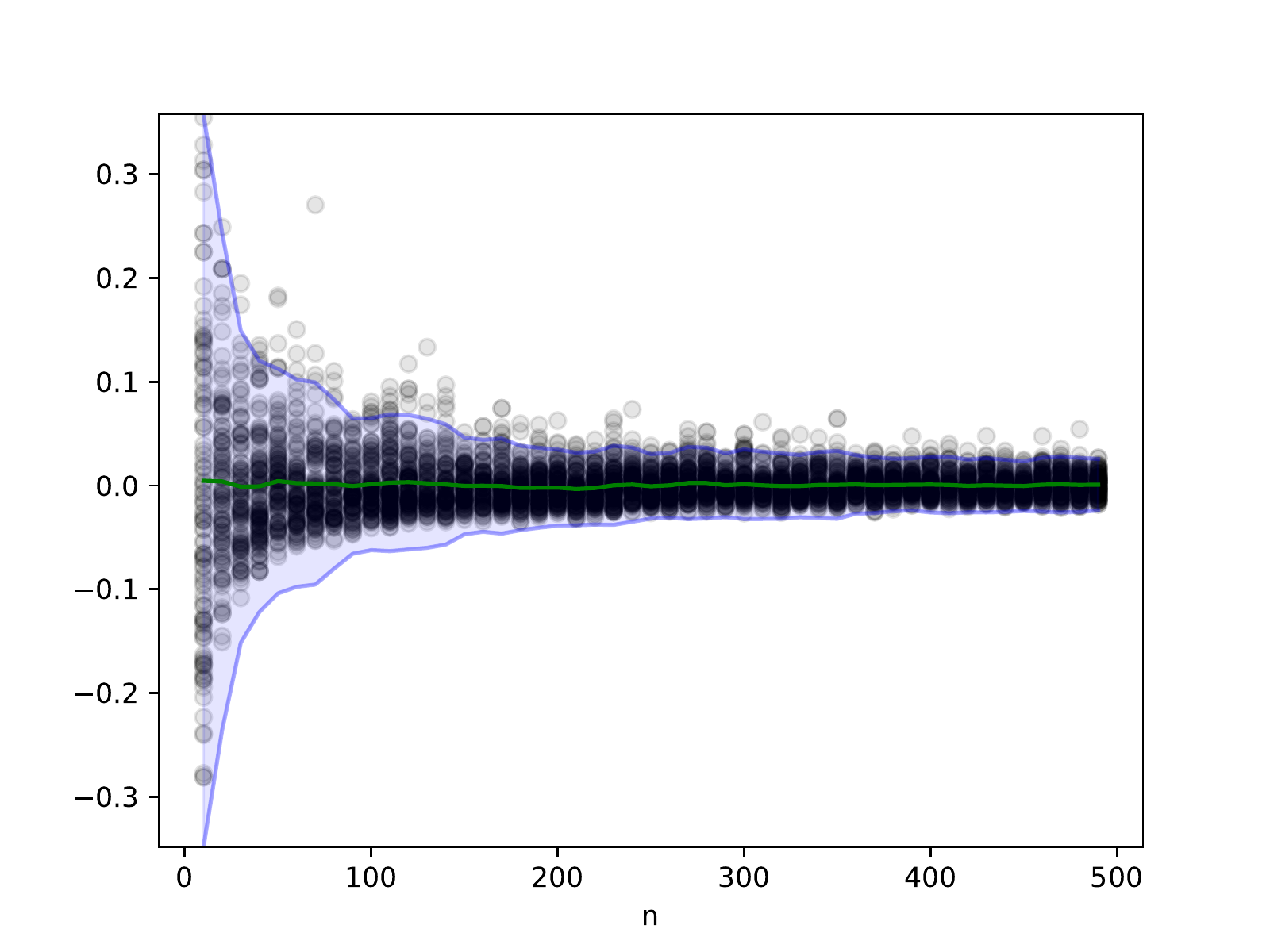}
	\includegraphics[width=0.3\columnwidth]{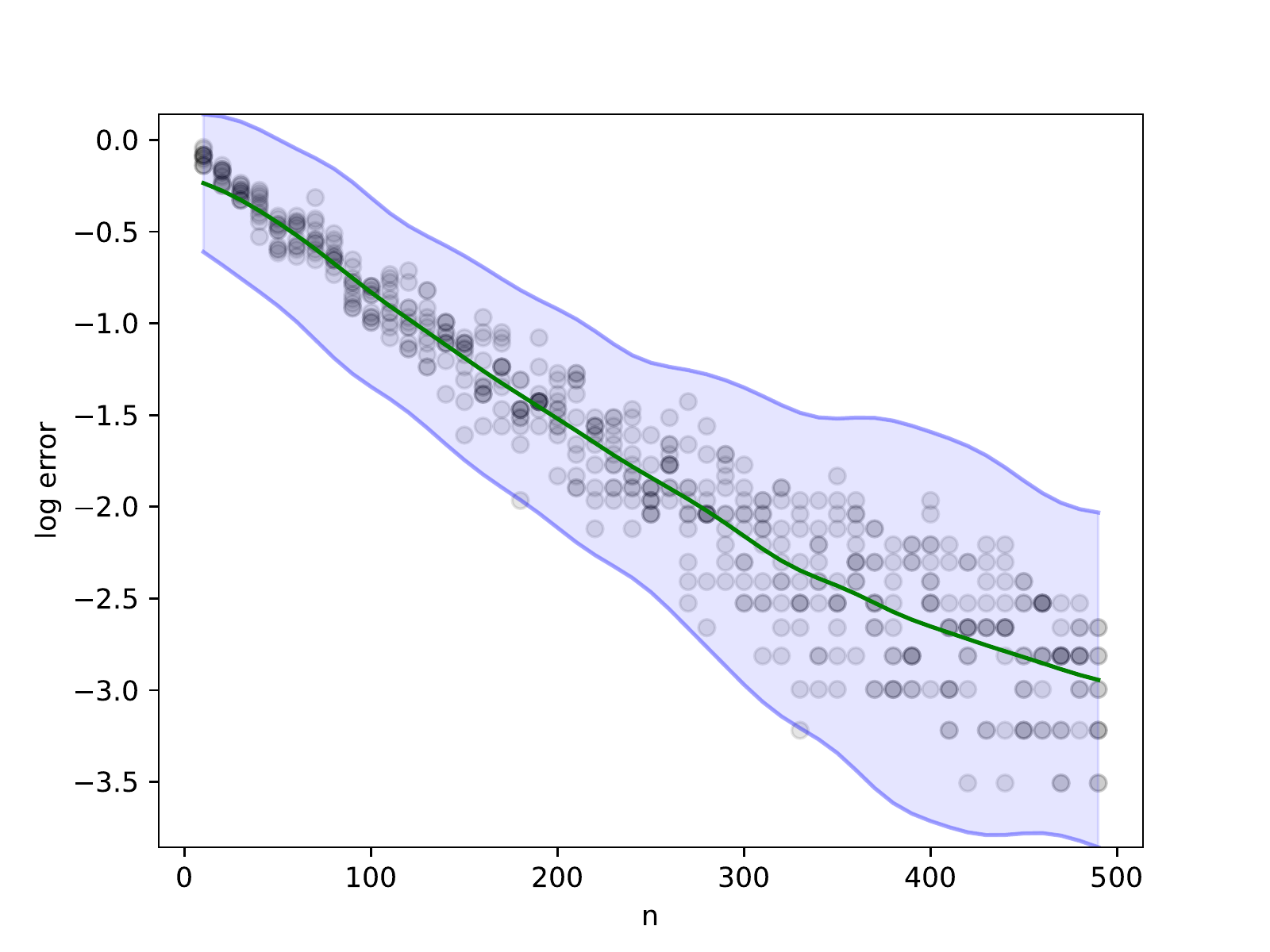}
	\caption{Leave one out cross validation error using kernel density estimation with Gaussian kernel.} \label{fig:density}
\end{figure}

\subsection{Linear regression}
In this particular example we chose $X \sim N(0,1)$ and $Y \mid X \sim N(5X,1)$. Our fitted line on a dataset $D$ is $T_D$, and our loss is $L(x,y) = |x-y|$. In this case we measure the error
\begin{align*}
	\hat L_{{LOO}} := \frac{1}{n} \sum_{i=1}^n L(T_{D_{(-i)}}(X_i),Y_i),
\end{align*}
where
\begin{align*}
	L_{{LOO}} := \E[L(T_D(X),Y) \mid D]
\end{align*}
and $(X,Y)$ is an independent copy of $Z_1 \in D$. Even though we have to resort to conditioning in \cref{ssec:linear} and only obtain a weaker concentration through \cref{thm:data-dependent}, \cref{fig:linear} shows an exponential type concentration.
Furthermore, the second image in \cref{fig:linear} shows the empirical value of $\P(|\hat L_{{LOO}} - L_{{LOO}}| > .02)$ in a log-scale, again revealing relatively linear structure.

\begin{figure}
	\centering
	\includegraphics[width=0.3\columnwidth]{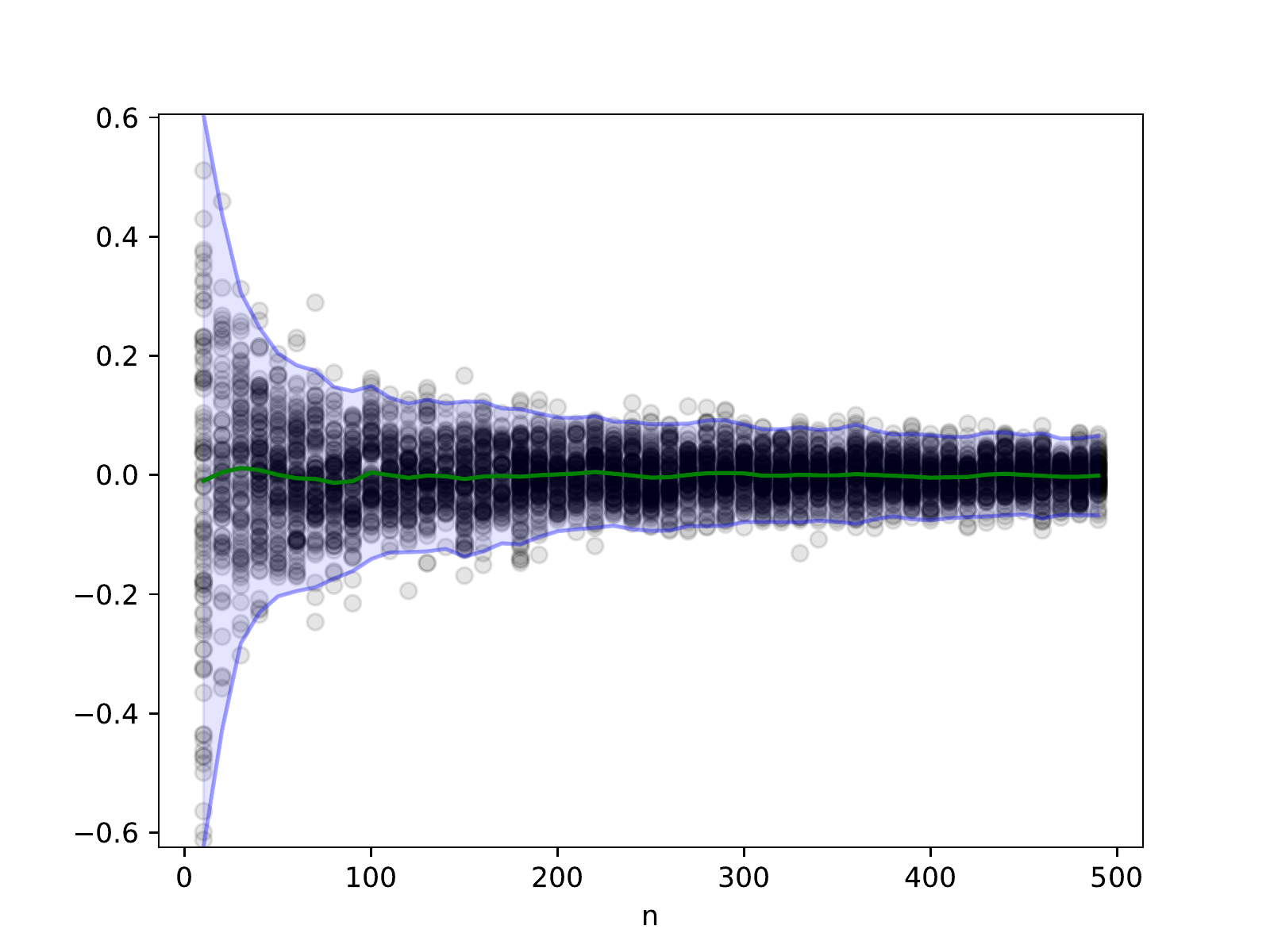}
	\includegraphics[width=0.3\columnwidth]{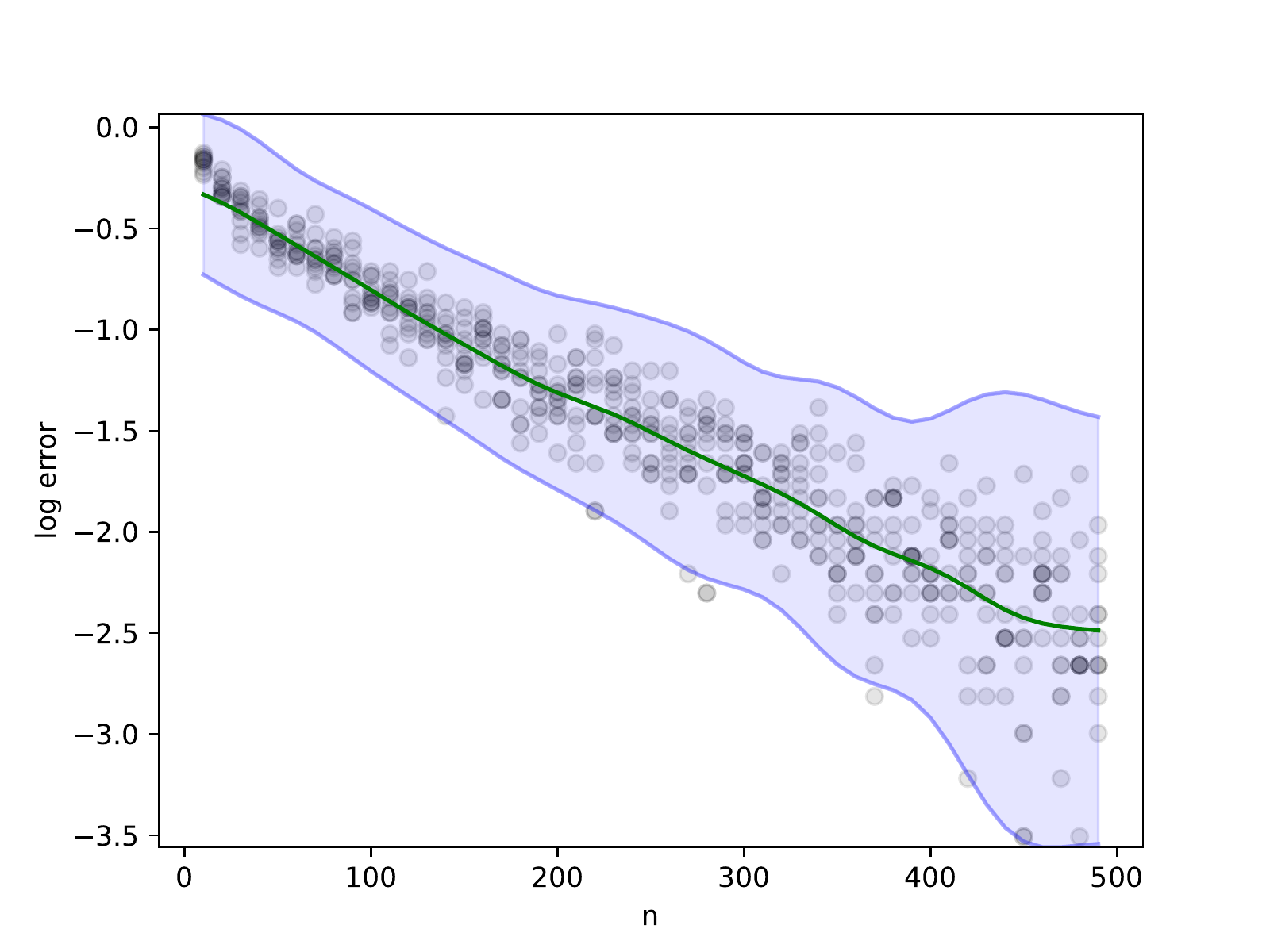}
	\caption{Leave one out cross validation error using linear regression.} \label{fig:linear}
\end{figure}

\subsection{Stabilized kernel regression}
In this particular example we chose $X \sim N(0,1)$ and $Y \mid X \sim N(\sin(10X),1)$. Our fitted curve on a dataset $D$ is $T_D$, and our loss is $L(x,y) = |x-y|$.

We chose the bandwidth $h = 1/100$, the stabilization parameter $\delta = 0.01$, keeping otherwise the setting as in the previous subsection. The result can be found in \cref{fig:kernel}. Again, we observe strong concentration as guaranteed by \cref{thm:main}.

\begin{figure}
	\centering
	\includegraphics[width=0.3\columnwidth]{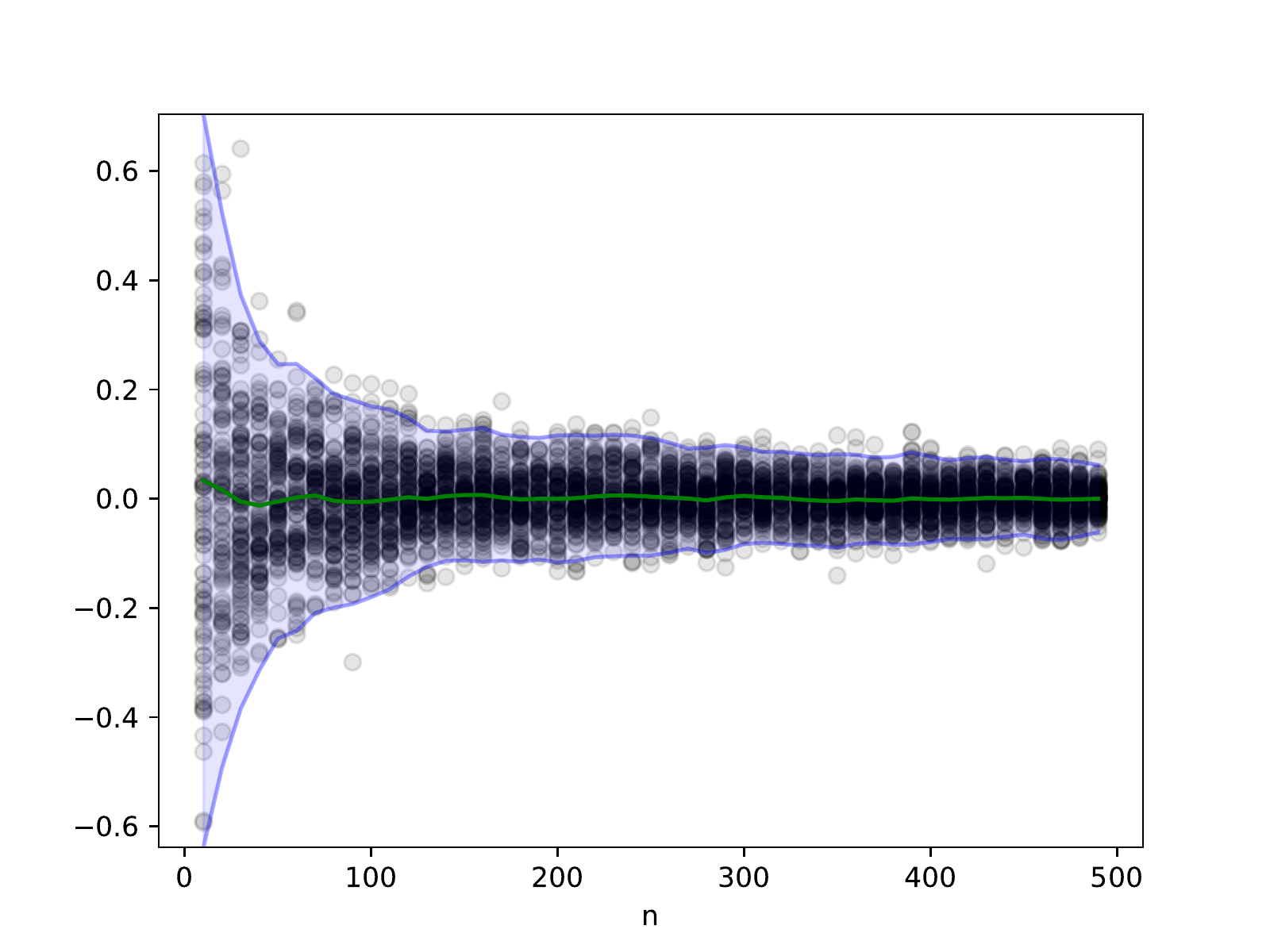}
	\includegraphics[width=0.3\columnwidth]{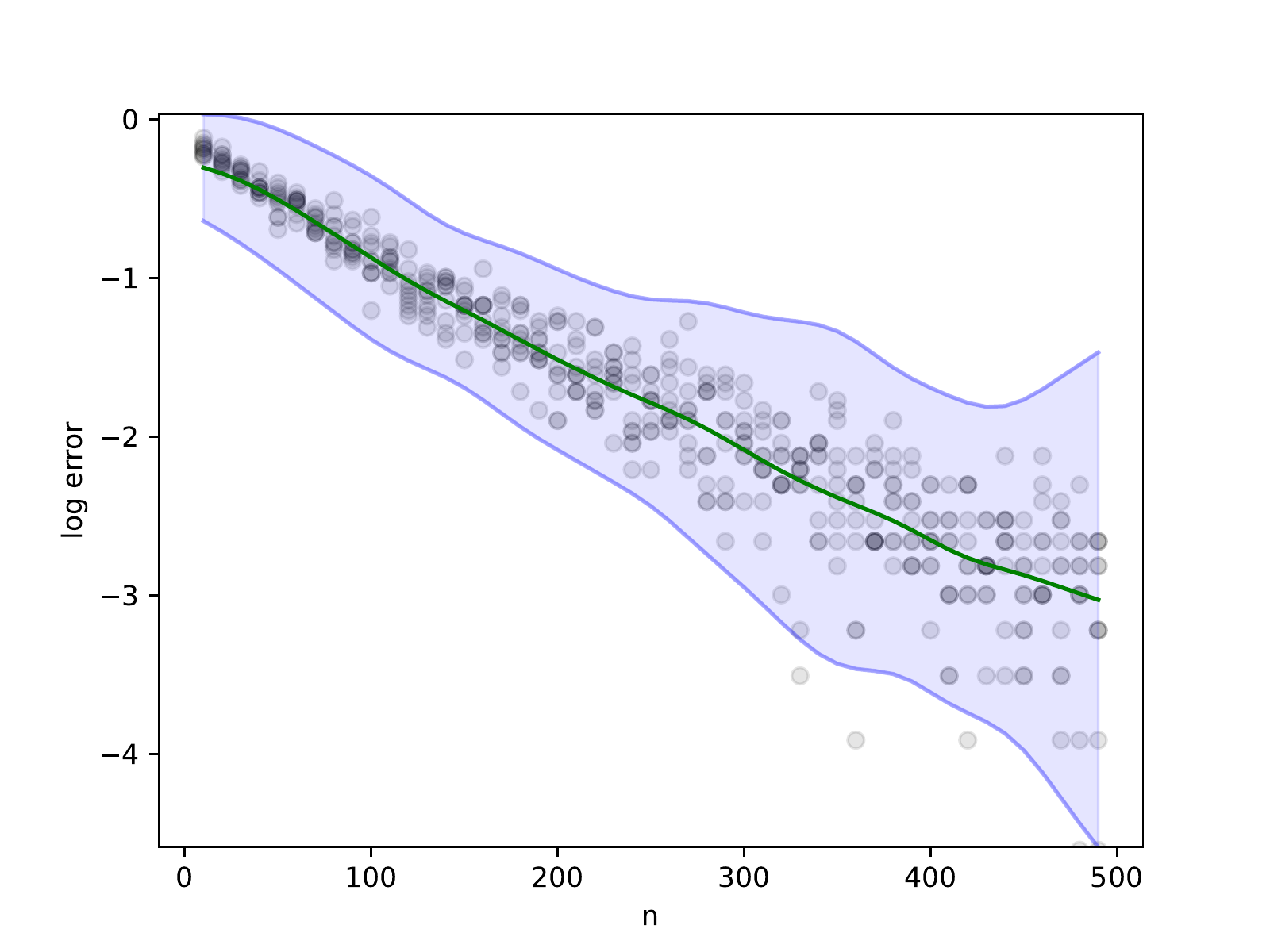}
	\caption{Leave one out cross validation error using stabilized kernel regression.} \label{fig:kernel}
\end{figure}

\section{Proofs}
\label{sec:proofs}

\begin{theorem}
	\label{thm:subexp-concentration}
	Let $X_1,\ldots,X_n \iid \mu$ be $\R$-valued random variables
	satisfying
	\begin{align*}
		\E[e^{s(X_i-\E[X_i])}] \leq e^{\frac{\lambda^2 s^2}{2}}, \quad s \leq \frac{1}{\lambda}.
	\end{align*}
	Then for any $\eps > 0$ we get, for $\overline{X}_n = \frac{1}{n} \sum_{i=1}^n X_i$, that
	\begin{align*}
		\P(\overline{X}_n - \E[\overline{X}_n] \geq \eps) \leq e^{-\frac{\eps^2 n}{2 \lambda^2}} \vee e^{- \frac{\eps n}{2\lambda}}.
	\end{align*}
\end{theorem}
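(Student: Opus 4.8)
The plan is to run a standard Chernoff (exponential Markov) argument, using independence to tensorize the moment generating function and then optimizing the resulting exponent over the admissible range of the Chernoff parameter. Writing $S_n = \sum_{i=1}^n (X_i - \E[X_i])$, the event in question is $\{S_n \geq \eps n\}$, so for any $s > 0$ Markov's inequality applied to $e^{s S_n}$ gives $\P(S_n \geq \eps n) \leq e^{-s \eps n}\E[e^{s S_n}]$.

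First I would use the i.i.d.~assumption to factor $\E[e^{s S_n}] = \prod_{i=1}^n \E[e^{s(X_i - \E[X_i])}]$, and then invoke the hypothesis $\E[e^{s(X_i - \E[X_i])}] \leq e^{\lambda^2 s^2/2}$, which is valid precisely for $0 < s \leq 1/\lambda$. This yields
\[
	\P(\overline{X}_n - \E[\overline{X}_n] \geq \eps) = \P(S_n \geq \eps n) \leq \exp\left( n \left( \tfrac{\lambda^2 s^2}{2} - s\eps \right) \right),
\]
valid for every $s \in (0, 1/\lambda]$.

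The remaining and only delicate step is to optimize the exponent $g(s) = \lambda^2 s^2/2 - s\eps$ over the constrained interval $s \in (0, 1/\lambda]$. The unconstrained minimizer is $s^\ast = \eps/\lambda^2$, so I would split into two regimes. When $\eps \leq \lambda$ one has $s^\ast \leq 1/\lambda$, the minimizer is admissible, and plugging it in gives $g(s^\ast) = -\eps^2/(2\lambda^2)$, producing the Gaussian rate $e^{-\eps^2 n/(2\lambda^2)}$. When $\eps > \lambda$ the minimizer falls outside the interval and $g$ is decreasing on $(0, 1/\lambda]$, so the best admissible choice is the endpoint $s = 1/\lambda$; this gives $g(1/\lambda) = 1/2 - \eps/\lambda$, and using $\eps > \lambda$ one checks that $1/2 - \eps/\lambda \leq -\eps/(2\lambda)$, producing the sub-exponential rate $e^{-\eps n/(2\lambda)}$.

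Finally I would combine the two regimes into the single stated bound: since $\eps^2/\lambda^2 \leq \eps/\lambda$ holds exactly when $\eps \leq \lambda$, in each regime the rate obtained above coincides with the larger of the two exponentials, so $\P(\overline{X}_n - \E[\overline{X}_n] \geq \eps) \leq e^{-\eps^2 n/(2\lambda^2)} \vee e^{-\eps n/(2\lambda)}$ holds throughout. The only place requiring genuine care is the constrained optimization, together with the boundary estimate $1/2 - \eps/\lambda \leq -\eps/(2\lambda)$ that converts the endpoint value into the clean sub-exponential rate; everything else is routine Chernoff bookkeeping.
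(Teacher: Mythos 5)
Your proof is correct. The paper itself states Theorem \ref{thm:subexp-concentration} without proof, treating it as a standard fact about sub-exponential (Bernstein-type) random variables; your Chernoff argument --- tensorizing the moment generating function over the i.i.d.\ sample, minimizing $\lambda^2 s^2/2 - s\eps$ over the constrained range $s \in (0,1/\lambda]$, and handling the two regimes $\eps \leq \lambda$ and $\eps > \lambda$ via the interior minimizer and the endpoint respectively --- is precisely the canonical proof one would supply, including the endpoint estimate $1/2 - \eps/\lambda \leq -\eps/(2\lambda)$ that yields the clean exponential rate.
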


The following lemma is the key estimate for our proofs.
\begin{lemma}
	\label{lemma:quadratic}
	Let $F: \R^{n \times m} \to \R$, and let $(X_1,\ldots,X_n) \in \R^{n \times m}$ be an i.i.d sequence of random variables with distribution $\mu$, with $\sigma^2(\mu) < \infty$. Assume further that the following structural assumption
	\begin{align*}
		\|\nabla F\| \leq \delta_1(n) + \delta_2(n) \|X\|
	\end{align*}
	holds.
	Then for all $\lambda < \frac{1}{4\sigma^2(\mu)} \delta_2^{-1}(n)$ we have
	\begin{align*}
		\E(e^{\lambda(F-\E[F])}) \leq e^{2\sigma^2(\mu)\lambda^2 \big[\delta_1^2(n) + \delta_2^2(n) n(\E[|X_i|^2] + 16 \sigma^2(\mu) )\big]}.
	\end{align*}
\end{lemma}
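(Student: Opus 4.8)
The plan is to adapt the Herbst argument to the linearly growing gradient bound. Write $\sigma^2 = \sigma^2(\mu)$ and $H(\lambda) = \E[e^{\lambda F}]$. Since the log-Sobolev constant tensorizes, $\mu^{\otimes n}$ satisfies $\Ent(g^2) \le 2\sigma^2\,\E[\|\nabla g\|^2]$ for all smooth $g$. Applying this to $g = e^{\lambda F/2}$, for which $\|\nabla g\|^2 = \tfrac{\lambda^2}{4}e^{\lambda F}\|\nabla F\|^2$, gives
\begin{equation*}
	\Ent(e^{\lambda F}) \le \frac{\sigma^2\lambda^2}{2}\,\E\big[e^{\lambda F}\|\nabla F\|^2\big].
\end{equation*}
Inserting $\|\nabla F\|^2 \le 2\delta_1^2(n) + 2\delta_2^2(n)\|X\|^2$ reduces the problem to controlling the self-referential term $\E[e^{\lambda F}\|X\|^2]$, since every other contribution is proportional to $H(\lambda)$.

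The heart of the proof is the estimate
\begin{equation*}
	\E\big[e^{\lambda F}\|X\|^2\big] \le n\big(\E[|X_i|^2] + 16\sigma^2\big)\,H(\lambda), \qquad \lambda < \frac{1}{4\sigma^2\delta_2(n)}.
\end{equation*}
I would prove it by passing to the tilted probability measure $\de\nu = H(\lambda)^{-1}e^{\lambda F}\de\mu^{\otimes n}$ and bounding $\E_\nu[\|X\|^2]$. The key structural fact is that $u := \|X\|$ is $1$-Lipschitz on $\R^{n\times m}$, hence sub-Gaussian under $\mu^{\otimes n}$ with proxy $\sigma^2$ by \cref{lemma:sgls}; in particular $\V[u] \le \sigma^2$ and $\E[u^2] = n\E[|X_i|^2]$. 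Because $\|\nabla F\| \le \delta_1(n) + \delta_2(n)u$, tilting by $e^{\lambda F}$ perturbs the law of $u$ like an exponential tilt by a function whose Lipschitz rate in $u$ is $\delta_2(n)$; the hypothesis $\lambda\delta_2(n) < \tfrac{1}{4\sigma^2}$ is exactly the condition keeping this tilt subcritical relative to the Gaussian tail of $u$. Splitting $\E_\nu[u^2] = (\E_\nu u)^2 + \V_\nu(u)$ and controlling the tilt-induced mean shift $\E_\nu u - \E u$ together with the tilted variance by feeding the smallness of $\lambda\delta_2(n)$ into the sub-Gaussian moment generating function of $u$, one collects all constants into the factor $16\sigma^2$. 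This self-bounding step, where the restriction on $\lambda$ is consumed, is the main obstacle.

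With the self-bound in hand the argument closes by the standard Herbst integration. Set $G(\lambda) = \lambda^{-1}\log H(\lambda)$, so that $G'(\lambda) = \Ent(e^{\lambda F})/(\lambda^2 H(\lambda))$. Combining the entropy inequality with the self-bound and the elementary $(\delta_1 + \delta_2\|X\|)^2 \le 2\delta_1^2 + 2\delta_2^2\|X\|^2$ yields
\begin{equation*}
	G'(\lambda) \le 2\sigma^2\big[\delta_1^2(n) + \delta_2^2(n)\,n(\E[|X_i|^2] + 16\sigma^2)\big]
\end{equation*}
on $0 < \lambda < \tfrac{1}{4\sigma^2\delta_2(n)}$ (any slack in the constant only weakens the final bound, which is fine). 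Since $G(0^+) = \E[F]$ by l'H\^{o}pital's rule, integrating from $0$ to $\lambda$ and subtracting $\E[F]$ gives
\begin{equation*}
	\log \E\big[e^{\lambda(F - \E[F])}\big] \le 2\sigma^2\lambda^2\big[\delta_1^2(n) + \delta_2^2(n)\,n(\E[|X_i|^2] + 16\sigma^2)\big],
\end{equation*}
which is the claimed inequality after exponentiating.
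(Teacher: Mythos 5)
Your overall architecture is genuinely different from the paper's: you run the classical Herbst scheme (log-Sobolev applied to $e^{\lambda F/2}$, then integrate $G(\lambda)=\lambda^{-1}\log H(\lambda)$), whereas the paper never forms a differential inequality at all. But as written your proof has a hole exactly where you yourself locate ``the heart of the proof'': the self-referential estimate
\begin{equation*}
\E\big[e^{\lambda F}\|X\|^2\big] \le n\big(\E[|X_i|^2]+16\sigma^2\big)H(\lambda),\qquad \lambda<\tfrac{1}{4\sigma^2\delta_2(n)},
\end{equation*}
is asserted, not proved, and the heuristic you offer for it does not deliver it. The statement that tilting by $e^{\lambda F}$ ``perturbs the law of $u$ like an exponential tilt by a function whose Lipschitz rate in $u$ is $\delta_2(n)$'' has no precise meaning, since $F$ is not a function of $u=\|X\|$; the only pointwise control the hypothesis yields is the radial bound $|F(X)-F(0)|\le \delta_1(n)u+\delta_2(n)u^2/2$, and if you implement the tilt with that worst case (numerator up, denominator down via Jensen) you pick up factors of order $e^{\lambda\delta_2(n)\,n\E[|X_i|^2]}$ --- exponentially worse in $n$ than the linear bound you need, and useless at the relevant scale $\lambda\sim\delta_2^{-1}(n)$. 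Likewise, sub-Gaussianity of $u$ and $\V[u]\le\sigma^2$ under $\mu^{\otimes n}$ say nothing by themselves about moments under the tilted measure $\nu$. So the central quantitative step of your proof is a genuine gap.

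The gap is fixable, but it needs a mechanism you never name: a self-bounding entropy argument. By the Gibbs variational principle, for $0<s\le \tfrac{1}{8\sigma^2}$ one has $s\,\E_\nu[u^2-\E u^2]\le \log\E[e^{s(u^2-\E u^2)}]+D(\nu\,\|\,\mu^{\otimes n})$, where the first term is at most $32\sigma^4s^2n$ by independence and the sub-exponential bound \cref{e:subexp} for $|X_i|^2$, while $D(\nu\,\|\,\mu^{\otimes n})=\Ent(e^{\lambda F})/H(\lambda)\le \sigma^2\lambda^2\big(\delta_1^2(n)+\delta_2^2(n)\E_\nu[u^2]\big)$ by the very entropy inequality you started from. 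The condition $\lambda\delta_2(n)<\tfrac{1}{4\sigma^2}$ makes the coefficient of $\E_\nu[u^2]$ on the right strictly smaller than $s/2$ at $s=\tfrac{1}{8\sigma^2}$, so one can solve the resulting inequality to get $\E_\nu[u^2]\le 2n\E[|X_i|^2]+8\sigma^2 n+16\sigma^4\lambda^2\delta_1^2(n)$, which, inserted into your bound for $G'(\lambda)$ and integrated, does give the lemma (with even slightly better constants). By contrast, the paper sidesteps all of this: it invokes the Bobkov--G\"otze inequality $\E[e^{f-\E f}]\le \E[e^{2\sigma^2\|\nabla f\|^2}]$ (\cite{B-G}, \cite[Theorem 2.7]{Ledoux}) with $f=\lambda F$, so that $\|X\|^2=\sum_i|X_i|^2$ lands inside an ordinary exponential moment, which factorizes over the independent coordinates and is controlled by the same sub-exponential bound --- no tilted measure, no self-reference, no integration. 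That is exactly the difficulty your route must confront head-on, and which your write-up leaves unresolved.
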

\begin{remark}
	In the case of a Lipschitz gradient we have $\delta_2(n) \equiv 0$, and the above is valid for all $\lambda$.
\end{remark}
\begin{remark}
	\label{remark:no-concentration}
	Note that we require $\delta_2(n) < \frac{1}{\sqrt{n}}$ or there will be no concentration. This highlights the fact that we actually need $\lim_{n \to \infty} F = c$ and $\lim_{n \to \infty} \|\nabla F\| \to 0$ for our estimators $F$. Indeed, if all $X_i$:s have variance $1$ and mean $0$, then if $\delta_2(n) = O(n^{-1/2})$, we get
	\begin{align*}
		\E[\|\nabla F\|^2] \leq O(n^{-1}) \E[\|X\|^2] = O(1)
	\end{align*}
	meaning that changing one observation point into another has an impact regardless of the amount of observations.
\end{remark}

\begin{proof}[Proof of \cref{lemma:quadratic}]
	% First we begin by recalling that from \cref{lemma:sgls} that $\sigma^2_{SG}(\mu) \leq \sigma^2(\mu)$,
	% \begin{align} \label{e:subexp}
	% 	\E[e^{\lambda (X_i^2- \E[X_i^2])}] \leq e^{\sigma^2(\mu) \lambda^2}, \lambda \leq \lambda_0.
	% \end{align}
	%
	% From the log-Sobolev inequality we get for any good enough $f$ that
	% \begin{align*}
	% 	\Ent(e^f) \leq \sigma^2(\mu) \E[\|\nabla f\|^2 e^{f}].
	% \end{align*}
	% From this one can deduce immediately that
	% \begin{align} \label{e:subexp}
	% 	\E[e^{\lambda (X_i^2- \E[X_i^2])}] \leq e^{\sigma^2(\mu) \lambda^2}, \lambda \leq \lambda_0.
	% \end{align}
	Since $\sigma^2(\mu) < \infty$, we get for any smooth $f$ that
	\begin{align*}
		\Ent(e^f) \leq \sigma^2(\mu) \E[\|\nabla f\|^2 e^{f}].
	\end{align*}
	Now defining $\Gamma(f) = 2\sigma^2(\mu) \|\nabla f\|^2$ we have by \cite{B-G} or \cite[Theorem 2.7]{Ledoux} that
	\begin{align} \label{eq:key}
		\E(e^{f-\E[f]}) \leq \E[e^{\Gamma(f)}].
	\end{align}
	Fix $\lambda \leq \lambda_0$ where $\lambda_0$ is to be chosen. Then applying \cref{eq:key} to $f = F$ and using independence we observe
	% and using independence, \cref{e:subexp}, and our structural assumption, we get
	\begin{align}
		\notag \E[e^{\Gamma(\lambda F)}]
		&=
		\E[e^{\lambda^2 \Gamma(F)}]
		\leq
		\E[e^{2\sigma^2(\mu)\lambda^2 ( \delta_1^2(n) + \delta_2^2(n) \sum_{i=1}^n |X_i|^2})]
		\\
		\label{eq:key2}
		&\leq
		e^{2\sigma^2(\mu)\lambda^2\delta_1^2(n)}\prod_{i=1}^n \E[e^{2\sigma^2(\mu)\lambda^2 \delta_2^2(n) |X_i|^2}]
	\end{align}
	where the right-hand side is finite for $2\sigma^2(\mu)\lambda^2 \delta_2^2(n) \leq \frac{1}{2\sigma^2(\mu)}$ (see \cite[Proposition 1.2]{Ledoux}).
	It is well known that if $\sigma^2_{SG}(\mu) < \infty$, then for $s_0 = \frac{1}{8\sigma^2(\mu)}$ it holds that
	\begin{align} \label{e:subexp}
		\E[e^{s (X_i^2- \E[X_i^2])}] \leq e^{32 (\sigma^2(\mu))^2 s^2}, s \leq s_0.
	\end{align}
	From \cref{eq:key,eq:key2,e:subexp} we finally have
	\begin{align*}
		\E[e^{\lambda(f-\E[f])}]
		&\leq
		e^{2\sigma^2(\mu)\lambda^2\delta_1^2(n)} e^{32 (\sigma^2(\mu))^2 (2\sigma^2(\mu)\lambda^2 \delta_2^2(n))^2n + 2\sigma^2(\mu)\lambda^2 \delta_2^2(n) \E[|X_i|^2]]n} \\
		&\leq
		e^{2\sigma^2(\mu)\lambda^2 \big[\delta_1^2(n) + \delta_2^2(n) n(\E[|X_i|^2] + 16 \sigma^2(\mu) )\big]}
	\end{align*}
	provided that $2\sigma^2(\mu)\lambda^2 \delta_2^2(n) \leq \frac{1}{8\sigma^2(\mu)}$,
	%provided that $\left (C \lambda^2 \delta^2(n) \right ) \leq \lambda_0$
	or equivalently setting $\lambda_0 = C_0 \delta^{-1}(n)$, for $C_0 = \sqrt{\frac{1}{16\sigma^4(\mu)}}$. Putting it all together gives the lemma.
\end{proof}

\begin{corollary} \label{cor:quadconc}
	Suppose that Assumptions of \cref{lemma:quadratic} prevail. Then
	\begin{align*}
		\P(F-\E[F] > t) \leq e^{-\frac{t^2}{8\sigma^2(\mu) \big[\delta_1^2(n) + \delta_2^2(n) n(\E[|X_i|^2] + 16 \sigma^2(\mu) )\big]}} \vee e^{-\frac{t }{8\sigma^2(\mu)\delta_2(n)} }
	\end{align*}
\end{corollary}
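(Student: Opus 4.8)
The plan is to apply the exponential Markov (Chernoff) inequality and then optimize over the admissible range of $\lambda$ supplied by \cref{lemma:quadratic}. Write $A := 2\sigma^2(\mu)\big[\delta_1^2(n) + \delta_2^2(n) n(\E[|X_i|^2] + 16\sigma^2(\mu))\big]$ and let $\lambda_0 := \frac{1}{4\sigma^2(\mu)\delta_2(n)}$ be the admissibility threshold, so that \cref{lemma:quadratic} reads $\E[e^{\lambda(F-\E[F])}] \leq e^{A\lambda^2}$ for every $0 < \lambda < \lambda_0$. For each such $\lambda$, Markov's inequality applied to $e^{\lambda(F - \E[F])}$ gives
\begin{align*}
\P(F - \E[F] > t) \leq e^{-\lambda t}\,\E[e^{\lambda(F-\E[F])}] \leq e^{-\lambda t + A\lambda^2}.
\end{align*}
The task is therefore to minimize $g(\lambda) := -\lambda t + A\lambda^2$ over $\lambda \in (0,\lambda_0)$ and to recognize the two factors of the claimed bound as the two regimes of this constrained optimization.

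The unconstrained minimizer of $g$ is $\lambda^* = t/(2A)$, with $g(\lambda^*) = -t^2/(4A)$. The first regime is $\lambda^* \leq \lambda_0$, equivalently $t \leq 2A\lambda_0$: here I would simply take the admissible choice $\lambda = \lambda^*$, obtaining $\P(F - \E[F] > t) \leq e^{-t^2/(4A)}$. Since $4A = 8\sigma^2(\mu)\big[\delta_1^2(n) + \delta_2^2(n) n(\E[|X_i|^2] + 16\sigma^2(\mu))\big]$, this is exactly the first (Gaussian) factor.

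In the complementary regime $t > 2A\lambda_0$ the minimizer $\lambda^*$ lies outside the admissible interval and $g$ is strictly decreasing on $(0,\lambda_0)$, so the infimum is approached as $\lambda \uparrow \lambda_0$, yielding $\P(F-\E[F]>t) \leq e^{-\lambda_0 t + A\lambda_0^2}$. The key observation is that $t > 2A\lambda_0$ gives $A\lambda_0 < t/2$ and hence $A\lambda_0^2 < \tfrac12 \lambda_0 t$, whence $-\lambda_0 t + A\lambda_0^2 < -\tfrac12 \lambda_0 t = -t/(8\sigma^2(\mu)\delta_2(n))$, which is precisely the second (exponential) factor. Bounding the probability in either case by the larger of the two exponentials produces the stated $\vee$ bound. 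This mirrors the Bernstein-type split already used for \cref{thm:subexp-concentration}.

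I do not expect a genuine obstacle here: the only point requiring care is the bookkeeping of the constraint $\lambda < \lambda_0$ and the justification that the boundary value $\lambda = \lambda_0$ may be used, which follows by a limit $\lambda \uparrow \lambda_0$ since the estimate of \cref{lemma:quadratic} holds on the open interval and both sides are continuous in $\lambda$. One should also note the degenerate case $\delta_2(n) \equiv 0$, in which $\lambda_0 = \infty$ and only the first regime arises, recovering the pure Gaussian tail consistent with the remark following \cref{lemma:quadratic}.
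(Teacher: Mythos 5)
Your proposal is correct and follows essentially the same route as the paper: Chernoff's bound via \cref{lemma:quadratic}, then a constrained optimization over $\lambda$ with the unconstrained minimizer $\lambda^* = t/(2A)$ in the Gaussian regime and the boundary choice $\lambda_0 = \frac{1}{4\sigma^2(\mu)\delta_2(n)}$ in the exponential regime, where $A\lambda_0 < t/2$ yields the factor $e^{-t/(8\sigma^2(\mu)\delta_2(n))}$. Your extra care about using the open interval (taking $\lambda \uparrow \lambda_0$) and the degenerate case $\delta_2(n) \equiv 0$ are harmless refinements of the same argument.
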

\begin{proof}
	By applying \cref{lemma:quadratic} and using Markov's inequality we get for $\lambda < \frac{1}{4\sigma^2(\mu)} \delta_2^{-1}(n)$ that
	\begin{align}\label{eq:key3}
		\P(F-\E[F] > t) \leq e^{-\lambda t} e^{2\sigma^2(\mu)\lambda^2 \big[\delta_1^2(n) + \delta_2^2(n) n(\E[|X_i|^2] + 16 \sigma^2(\mu) )\big]}.
	\end{align}
	The minimizer is given by
	$\lambda^\ast = \frac{t}{4\sigma^2(\mu) \big[\delta_1^2(n) + \delta_2^2(n) n(\E[|X_i|^2] + 16 \sigma^2(\mu) )\big]}$ which by plugging into \cref{eq:key3} yields, provided that $\lambda^\ast < \frac{1}{4\sigma^2(\mu)} \delta_2^{-1}(n)$,
	\begin{align*}
		\P(F-\E[F] > t) \leq e^{-\frac{t^2}{8\sigma^2(\mu) \big[\delta_1^2(n) + \delta_2^2(n) n(\E[|X_i|^2] + 16 \sigma^2(\mu) )\big]}}.
	\end{align*}
	On the other hand, in the case that $\lambda^\ast \geq \frac{1}{4\sigma^2(\mu)} \delta_2^{-1}(n)$, we choose $\hat \lambda = \frac{1}{4\sigma^2(\mu)} \delta_2^{-1}(n)$ from which plugging into \cref{eq:key3} gives
	\begin{align*}
		\P(F-\E[F] > t) &\leq e^{\hat \lambda\left (\lambda^\ast 2\sigma^2(\mu) \big[\delta_1^2(n) + \delta_2^2(n) n(\E[|X_i|^2] + 16 \sigma^2(\mu) )\big] - t \right )} \\
		&= e^{\frac{-t}{2}\hat \lambda} = e^{-\frac{t }{8\sigma^2(\mu)\delta_2(n)} }.
	\end{align*}
	Combining both estimates completes the proof.
\end{proof}

\subsection{Proof of \cref{thm:main}}
Recall that we defined $f_D(z) = L(T_D(x),y)$, for $z = (x,y)$, and $f_i(z) = L(T_{D_{(-i)}}(x),y)$. We begin by decomposing the error into the following components
\begin{align*}
	L_{{LOO}} - \hat L_{{LOO}}
	=&
	\frac{1}{n} \sum_{i=1}^n \left ( \E[f_i(Z_i) \mid Z_i] -f_i(Z_i)\right )
	\\
	&+\frac{1}{n} \sum_{i=1}^n \left(\E[f_i(Z_i)]-\E[f_i(Z_i) \mid Z_i] \right )
	\\
	&+\frac{1}{n} \sum_{i=1}^n \left(\E[f_D(Z) \mid D]-\E[f_i(Z_i)]\right)
	\\
	=&
	I_1 + I_2 + I_3.
\end{align*}
By using the union bound,
\begin{align*}
	\P(I_1+I_2+I_3 > \eps) \leq \P(I_1 > \eps/3)+\P(I_2 > \eps/3)+\P(I_3 > \eps/3)
\end{align*}
it suffices to consider the terms $I_1$, $I_2$, and $I_3$ separately. Consider the term $I_1$ first.
We have
\begin{align*}
	\P(I_1 > \eps/3) \leq \sum_{i=1}^n \P(\E[f_i(Z_i) \mid Z_i] -f_i(Z_i) > \eps/3).
\end{align*}

Now for each fixed $z$, we may apply \cref{cor:quadconc} together with \cref{assumption:standing} to obtain
\begin{align*}
	\P(\E[f_i(z)]-f_i(z) > \eps/3) \leq \theta_1(n,\eps/3,z),
\end{align*}
where $\theta_1$ is from \cref{eq:theta1-1}.
Hence, by using the towering property for conditional expectation, it follows that
\begin{align*}
	\P(\E[f_i(Z_i) \mid Z_i]-f_i(Z_i) > \eps/3) \leq \E\theta_1(n,\eps/3,Z),
\end{align*}
and hence
\begin{align*}
	\P(I_1 > \eps/3) \leq n \E\theta_1(n,\eps/3,Z).
\end{align*}
Consider next the term $I_2$ and denote $f(z) = \E[f_i(z)]$. We have
\begin{align*}
	\P(I_2 > \eps/3)
	=
	\P\left (\E[f(Z)]-\frac{1}{n} \sum_{i=1}^n f(Z_i) > \eps/3 \right ).
\end{align*}
Let $\mu$ be the distribution of $Z$. By assumption $\sigma^2(\mu) < \infty$, and it thus follows from \cref{lemma:sgls} and the linear growth of $f(Z)$, that
\begin{align*}
	\P(I_2 > \eps/3) \leq e^{-\frac{\eps^2 n}{C}}.
\end{align*}
Similarly, if $f$ has quadratic growth, we use \cref{thm:subexp-concentration} to get
\begin{align*}
	\P(I_2 > \eps/3) \leq \max\left \{e^{-\frac{\eps^2 n}{C}},e^{-\frac{\eps n}{C}} \right \}.
\end{align*}
Combining both estimates yields
\begin{align*}
	\P(I_2>\eps/3) \leq \theta_2(n).
\end{align*}
It remains to consider the term $I_3$. We decompose
\begin{align*}
	\P(I_3 \geq \eps/3)
	&=
	\P\left (\E[f_D(Z) \mid D]-\E[f(Z)] > \eps/3 \right )
	\\
	&=
	\P\left (\E[f_D(Z)]-\E[f(Z)]+\E[f_D(Z) \mid D]-\E[\E[f_D(Z) \mid Z]] > \eps/3 \right ).
\end{align*}
Now
\begin{align*}
	|\E_Z[f_D(Z)]-\E_Z[f_{D'}(Z)]|
	&=
	|\E_Z[f_D(Z)-f_{D'}(Z)]|
	\leq
	\E_Z[|f_D(Z)-f_{D'}(Z)|]
	\\
	&\leq \E_Z[\delta_1(n,Z)]\|D-D'\| + \E_Z[\delta_2(n,Z)]\|D-D'\|^2.
\end{align*}
Hence, again by \cref{cor:quadconc}, we get
\begin{align*}
	\P(I_3 \geq \eps/3)
	&\leq
	\theta_3(n,\eps/3-\delta_3(n))
\end{align*}
provided that $\eps/3 > (\E[f_D(Z)]-\E[f(Z)])$, where $|\E[f_D(Z)]-\E[f(Z)]| \leq \delta_3(n)$. This handles the term $I_3$ as well, and collecting all bounds together proves the claim.

\subsection{Proof of \cref{thm:simplified}}
The claim follows directly from \cref{thm:main} by plugging in correct forms of $\theta_1$, $\theta_2$, and $\theta_3$ in the linear growth case and with $\delta_2(n,z) \equiv 0$.

\subsection{Proof of \cref{thm:data-dependent}}
\paragraph{\bf Step 1: Conditioning}

Consider
\begin{multline} \label{eq:data1}
	\P\left ( L_{{LOO}}
	- \hat L_{{LOO}} > \eps \right ) = \P\left ( L_{{LOO}} - \hat L_{{LOO}} > \eps, D \in K \right ) \\+
	\P\left ( L_{{LOO}} - \hat L_{{LOO}} > \eps, D  \notin K \right ).
\end{multline}
The last term on the right we simply bound by $\P(D \notin K)$. We define, with $D'$ as an independent copy of $D$,
\begin{align*}
	L_{{LOO},K} = \E[f_D(Z_1') \mid D, D' \in K].
\end{align*}
We have
\begin{align}
	\notag&\P\left ( L_{{LOO}} - \hat L_{{LOO}} > \eps, D \in K \right ) \\
	\notag&= \P\left ( L_{{LOO}} - \hat L_{{LOO}} > \eps | D \in K \right )\P(D \in K) \\
	\label{eq:data2}
	&\leq \P\left ( L_{{LOO},K} - \hat L_{{LOO}} + L_{{LOO}}-L_{{LOO},K} > \eps | D \in K \right ).
\end{align}
Now, using again the union bound, we get
\begin{equation}\label{eq:data3}
	\begin{multlined}
		\P\left ( L_{{LOO},K} - \hat L_{{LOO}} + L_{{LOO}}-L_{{LOO},K} > \eps | D \in K \right ) \\
		\leq \P\left ( L_{{LOO},K} - \hat L_{{LOO}} > \eps/2 | D \in K \right ) \\
		+\P\left (L_{{LOO}}-L_{{LOO},K} > \eps/2 | D \in K \right ).
	\end{multlined}
\end{equation}

\paragraph{\bf Step 2: Splitting the error into $I_1,I_2,I_3$}

For the first term on the right-hand side of \cref{eq:data3} we get
\begin{align*}
	L_{{LOO},K} - \hat L_{{LOO}}
	=&
	\frac{1}{n} \sum_{i=1}^n \left ( \E[f_i(Z_i) \mid Z_i, D \in K] -f_i(Z_i)\right )
	\\
	&+\frac{1}{n} \sum_{i=1}^n \left(\E[f_i(Z_i) \mid D \in K]-\E[f_i(Z_i) \mid Z_i, D \in K] \right )
	\\
	&+\frac{1}{n} \sum_{i=1}^n \left(\E[f_D(Z_i') \mid D, D' \in K]-\E[f_i(Z_i) \mid D \in K]\right)
	\\
	=&
	I_1 + I_2 + I_3.
\end{align*}

\paragraph{\bf Step 2a: Dealing with $I_1$}

This is the trickiest term in the conditioned case.
For a fixed $z$ note that $f_i$ is Lipschitz with respect to $D_{(-i)}$ by \cref{assumption:standing}, with Lipschitz constant $\delta_{1,K}(n,Z)$. We let $\rho^n$ be the density of $\mu^{\otimes n}$ and let
\begin{align*}
	\rho_K = \frac{\rho^n}{\int_K \rho^n dD}
\end{align*}
be the conditional density and denote the corresponding measure with $\mu_K$. Furthermore, denote the slice $K_i = K \cap \{Z_i = z\}$, and write the partial density as
\begin{align*}
	\rho_{K_1(z)} = \frac{\rho^{n-1}}{\int_{K_1(z)} \rho^{n-1} dD_{(-1)}}.
\end{align*}
For the corresponding measure, we use the notation $d\mu_{K_1}$.
Since $C_i(z) = \sigma^{2}_{SG}(\mu_{K_1(z)}) < \infty$ by \cref{eq:BNT}, we get
\begin{align*}
	\P(\E[f_i(z) \mid D_{(-i)} \in K_i(z)]-f_i(z) > \eps/6 \mid D \in K, Z_i = z) \leq e^{-\frac{(\eps/6)^2}{8 C_i(z) \delta_{1,K}^2(n,z)}}.
\end{align*}
Hence, if we let
\begin{align*}
	\theta_{1,K}(n,\eps) := \exp \left (-\frac{\eps^2}{8 C_i(Z_i) \delta_{1,K}^2(n,Z_i)} \right ), \quad \E_K \theta_{1,K} := \E[\theta_{1,K} \mid D \in K],
\end{align*}
we obtain
\begin{align}\label{eq:data_I1}
	\P(\E[f_i(Z_i) \mid D \in K,Z_i]-f_i(Z_i) > \eps/6 \mid D \in K] \leq \E_K\theta_{1,K}(n,\eps/6).
\end{align}

\paragraph{\bf Step2b: estimating $I_2,I_3$}

To deal with $I_2$, we recall that since $K$ is permutation symmetric, $\E[f_i(z) \mid D_{(-i)} \in K_i(z)]$ does not depend on $i$, and we can set $f(z)=\E[f_i(z) \mid D_{(-i)} \in K_i(z)]$. Let $D'$ be an independent copy of $D$, and consider
\begin{align*}
	\P(I_2 > \eps/6 \mid D \in K)
	=
	\P\left (\E\left [\frac{1}{n}\sum_{i=1}^n f(Z_i') \mid D' \in K \right ]-\frac{1}{n} \sum_{i=1}^n f(Z_i) > \eps/6 \mid D \in K\right ).
\end{align*}
By the linear or quadratic growth assumption we can bound the above by $\theta_2(n)$, but with $C=\sigma^{2}_{SG}(\mu_{K})$, i.e.
\begin{align}\label{eq:data_I2}
	\P(I_2 > \eps/6 \mid D \in K) \leq \theta_{2,K}(n,\eps/6).
\end{align}
For the term $I_3$, i.e.
\begin{multline*}
	\P(I_3 > \eps/6 \mid D \in K) \\
	= \P\left (\frac{1}{n} \sum_{i=1}^n \left(\E[f_D(Z'_i) \mid D, D' \in K]-\E[f(Z_i) \mid D \in K]\right) > \eps/6 \mid D \in K \right),
\end{multline*}
we argue as in the proof of \cref{thm:main} and get, using also the fact
\begin{align*}
	\|\nabla_D \E[f_D(Z'_i) \mid D, D' \in K]\| &\leq \E[\|\nabla_D f_D(Z'_i)\| \mid D, D' \in K] \\
	&\leq \E[\delta_{1,K}(n,Z_i') \mid D' \in K],
\end{align*}
that for $\hat C^2 = \sigma^2_{SG}(\mu_K)(\E[\delta_{1,K}(n,Z_1) \mid D \in K])^2$ we have the estimate
\begin{align}\label{eq:data_I3}
	\P(I_3 > \eps/6 \mid D \in K) \leq e^{-\frac{(\eps/6-\delta_3(n))^2}{8\hat C^2}} =: \theta_{3,K}(n,\eps/6-\delta_{3,K}(n)),
\end{align}
where
\begin{align}\label{eq:data_I3_1}
	|\E[f_D(Z'_i) \mid D \in K, D' \in K] - \E[f(Z_i) \mid D \in K]| \leq \delta_{3,K}(n).
\end{align}

Now assembling \cref{eq:data1,eq:data2,eq:data3,eq:data_I1,eq:data_I2,eq:data_I3,eq:data_I3_1} leads to
\begin{multline} \label{eq:data_I123}
	\P\left ( L_{{LOO}} - \hat L_{{LOO}} > \eps \right ) \leq \E_K\theta_{1,K}(n,\eps/6)
	+\theta_{2,K}(n,\eps/6)
	+\theta_{3,K}(n,\eps/6-\delta_{3,K}(n))\\
	+\P(D \not \in K)
	+\P\left (L_{{LOO}}-L_{{LOO},K} > \eps/2 | D \in K \right ).
\end{multline}

\paragraph{\bf Step 3: The cost of restriction}

In order to estimate the last term in \cref{eq:data_I123} given by
\begin{align*}
	\P\left (L_{{LOO}}-L_{{LOO},K} > \eps/2 | D \in K \right ),
\end{align*}
we write
\begin{align*}
	g_K(D) = L_{{LOO}} - L_{{LOO},K} = \E[f_D(Z) \mid D]- \E[f_D(Z'_1) \mid D, D' \in K]
\end{align*}
and
\begin{align*}
	\gamma(K) = \E[ g_K(D) \mid D \in K].
\end{align*}
Then
\begin{align*}
	\P\left (L_{{LOO}}-L_{{LOO},K} > \eps/2 | D \in K \right )
	=\P\left (g_K(D) - \gamma(K)> \frac{\eps}{2} - \gamma(K) | D \in K \right ).
\end{align*}
Using now that $g_K(D)$, conditioned on $D\in K$, satisfies (cf. Proof of \cref{thm:main})
\begin{align*}
	\|\nabla_D g_K(D)\| \leq \E\left[\delta_{1,K}(n,Z_1) \mid D \in K\right],
\end{align*}
we may proceed as in \cref{eq:data_I3} to get
\begin{align}\label{eq:data_I4}
	\P\left (g_K(D) - \E g_K(D)> \eps/2 - \gamma(K) | D \in K \right )
	\leq \theta_{3,K}\left(n,\eps/6-\gamma(K)\right).
\end{align}
Hence, we obtain from \cref{eq:data_I123,eq:data_I4} that
\begin{multline} \label{eq:data_I123_final}
	\P\left ( L_{{LOO}} - \hat L_{{LOO}} > \eps \right ) \leq \E_K\theta_{1,K}(n,\eps/6)
	+\theta_{2,K}(n,\eps/6)
	+\theta_{3,K}(n,\eps/6-\delta_{3,K}(n))\\
	+\theta_{3,K}(n,\eps/6-\gamma(K))
	+\P(D \not \in K).
\end{multline}

\paragraph{\bf Step 4: Estimation of $\E_K \theta_{1,K}$}

Let $K_i(z) = K \cap \{Z_i = z\}$, and note that due to the permutation symmetry we have that $K_i(z) = K_j(z)$, for all $i,j$. As such, it suffices to consider $K_1$. From \cref{eq:BNT} we have that
\begin{align}\label{eq:data_addendum}
	\sigma_{SG}^2(\mu_{K_1}) \leq c \log\left ( \frac{e}{\mu^{\otimes (n-1)}(K_1)} \right ) \sigma^2(\mu).
\end{align}
We estimate
\begin{multline}
	\E_K\theta_{1,K} \leq \E[\theta_{1,K} \chi_{\mu^{\otimes (n-1)}(K_1) > 2^{-1}}(Z_1) \mid D \in K] \\
	+ \P(\mu^{\otimes (n-1)}(K_1(Z_1)) \leq 2^{-1} \mid D \in K),
\end{multline}
where in the first term on the right we can use \cref{eq:data_addendum} and obtain that $\theta_{1,K}$ can be replaced with
\begin{align*}
	\hat \theta_{1,K} := \exp \left (-\frac{\eps^2}{8 C \delta_{1,K}^2(n,Z_1)} \right ),
\end{align*}
where $C = 2c \sigma^2(\mu)$. This leads to
\begin{align}\label{eq:data_addendum_1}
	\E_K\theta_{1,K} \leq \E[\hat \theta_{1,K} \mid D \in K] + \P(\mu^{\otimes (n-1)}(K_1) < 2^{-1} \mid D \in K),
\end{align}
and hence assembling \cref{eq:data_addendum_1,eq:data_I123_final} completes the whole proof.

%

%\appendix

\section{Conclusions} 
\label{sec:conclusion}
In this article we have provided concentration inequalities for {LOO} cross validation under a general framework. Our approach is applicable for data arising from distribution that satisfies the logarithmic Sobolev inequality, providing us a relatively rich class of distributions. While our approach \emph{a priori} is not suitable for bounded data, we stress that in the case of bounded random variables, one can apply well-known concentration inequalities for bounded random variables, but one now has to measure the gradient w.r.t.~the $l^\infty$ norm on $\R^d$.

We have also illustrated the applicability of our method by considering several interesting examples. Obviously, our approach could be used in other practical estimation schemes as well. Finally, while we have restricted our study to the case of {LOO} cross validation only, one could use similar methodology to study concentration for other cross validation procedures as well. For example, leave-$k$-out cross validation could be covered with suitable changes in our procedure.

{
We also note that our approach could be used for model selection under our stability assumptions. The quantity of interest in this case is not the cross validated value of the losses $\hat L^{1}_{LOO}$ and $\hat L^{2}_{LOO}$ for two different estimators, but rather the difference $\hat L^{1}_{LOO} - \hat L^{2}_{LOO}$ of errors (or sign if one is solely interested in which performs better) when comparing two different models. It can be noted that often one expects the difference to concentrate better compared to each individual terms, see for instance \cite{Arlot0,Arlot}. Specifically, we note that we can apply our \cref{thm:simplified,thm:main,thm:data-dependent} directly to the difference. If the difference has better stability in terms of \cref{definition:standing}, we get better concentration around the mean of the difference. Concentration further implies that one can base the model selection directly on the quantity $\hat L^{1}_{LOO} - \hat L^{2}_{LOO}$.

\section*{Acknowledgments}
B.A. was supported by the Swedish Research Council dnr: 2019-04098. We would also like to thank the anonymous referees for their valuable comments that helped improve the exposition of the paper and for pointing out the pertinent references \cite{Arlot0,Arlot}.

}

\end{document}